\newtheorem{theorem}{Theorem}
\newtheorem{corollary}[theorem]{Corollary}
\newtheorem{proposition}[theorem]{Proposition}
\newtheorem{remark}[theorem]{Remark}
\newcommand{\Tnw}{{T}^{\mbox{\tiny$\scriptstyle\nwarrow$}}}
\newcommand{\Tsw}{{T}^{\mbox{\tiny$\scriptstyle\swarrow$}}}
\newcommand{\Tne}{{T}^{\mbox{\tiny$\scriptstyle\nearrow$}}}
\newcommand{\Tse}{{T}^{\mbox{\tiny$\scriptstyle\searrow$}}}
\newcommand{\Pnw}{{P}^{\mbox{\tiny$\scriptstyle\nwarrow$}}}
\newcommand{\Pse}{{P}^{\mbox{\tiny$\scriptstyle\searrow$}}}
\newcommand{\tPne}{\widetilde{P}^{\mbox{\tiny$\scriptstyle\nearrow$}}}
\newcommand{\tPnw}{\widetilde{P}^{\mbox{\tiny$\scriptstyle\nwarrow$}}}
\newcommand{\tPse}{\widetilde{P}^{\mbox{\tiny$\scriptstyle\searrow$}}}
\newcommand{\Cnw}{{C}^{\mbox{\tiny$\scriptstyle\nwarrow$}}}
\newcommand{\tCse}{\widetilde{C}^{\mbox{\tiny$\scriptstyle\searrow$}}}
\newcommand{\tCsw}{\widetilde{C}^{\mbox{\tiny$\scriptstyle\swarrow$}}}
\newcommand{\cTnw}{\mathcal{T}^{\mbox{\tiny$\scriptstyle\nwarrow$}}}
\newcommand{\cTne}{\mathcal{T}^{\mbox{\tiny$\scriptstyle\nearrow$}}}
\newcommand{\cTse}{\mathcal{T}^{\mbox{\tiny$\scriptstyle\searrow$}}}
\newcommand{\cPne}{\mathcal{P}^{\mbox{\tiny$\scriptstyle\nearrow$}}}
\newcommand{\cPnw}{\mathcal{P}^{\mbox{\tiny$\scriptstyle\nwarrow$}}}
\newcommand{\cPse}{\mathcal{P}^{\mbox{\tiny$\scriptstyle\searrow$}}}
\newcommand{\cQne}{\mathcal{Q}^{\mbox{\tiny$\scriptstyle\nearrow$}}}
\newcommand{\cQse}{\mathcal{Q}^{\mbox{\tiny$\scriptstyle\searrow$}}}
\newcommand{\cCnw}{\mathcal{C}^{\mbox{\tiny$\scriptstyle\nwarrow$}}}
\newcommand{\cCsw}{\mathcal{C}^{\mbox{\tiny$\scriptstyle\swarrow$}}}
\newcommand{\cCse}{\mathcal{C}^{\mbox{\tiny$\scriptstyle\searrow$}}}
\newcommand{\cSq}{\mathcal{S}q}
\newcommand{\Sq}{{Sq}}
\newcommand{\cCp}{\mathcal{C}p}
\newcommand{\Cp}{{Cp}}
\newcommand{\cA}{\mathcal{A}}
\newcommand{\cW}{\mathcal{W}}
\newcommand{\cM}{\mathcal{M}}
\newcommand{\cP}{\mathcal{P}}
\newcommand{\cT}{\mathcal{T}}
\newcommand{\cDCsw}{\mathcal{D}^{\mbox{\tiny$\scriptstyle\swarrow$}}}
\newcommand{\cDCnw}{\mathcal{D}^{\mbox{\tiny$\scriptstyle\nwarrow$}}}
\newcommand{\cDCse}{\mathcal{D}^{\mbox{\tiny$\scriptstyle\searrow$}}}
\newcommand{\cDCne}{\mathcal{D}^{\mbox{\tiny$\scriptstyle\nearrow$}}}
\newcommand{\cTSnw}{\mathcal{T}^{\mbox{\tiny$\scriptstyle\nwarrow$}}}
\newcommand{\cTSsw}{\mathcal{T}^{\mbox{\tiny$\scriptstyle\swarrow$}}}
\newcommand{\DCsw}{{D}^{\mbox{\tiny$\scriptstyle\swarrow$}}}
\newcommand{\tDCsw}{\widetilde{D}^{\mbox{\tiny$\scriptstyle\swarrow$}}}
\newcommand{\DCnw}{{D}^{\mbox{\tiny$\scriptstyle\nwarrow$}}}
\newcommand{\DCse}{{D}^{\mbox{\tiny$\scriptstyle\searrow$}}}
\newcommand{\DCne}{{D}^{\mbox{\tiny$\scriptstyle\nearrow$}}}
\newcommand{\TSnw}{{T}^{\mbox{\tiny$\scriptstyle\nwarrow$}}}
\newcommand{\TSsw}{{T}^{\mbox{\tiny$\scriptstyle\swarrow$}}}
\newcommand{\tTSsw}{\widetilde{T}^{\mbox{\tiny$\scriptstyle\swarrow$}}}
\newcommand{\Qse}{{Q}^{\mbox{\tiny$\scriptstyle\searrow$}}}
\newcommand{\Qne}{{Q}^{\mbox{\tiny$\scriptstyle\nearrow$}}}
\author{Enrica Duchi}
\title[Square permutations and convex permutominoes]{A code for square permutations \\and convex permutominoes}
\affiliation{
  IRIF, Université Paris Diderot, France
}
\keywords{Bijection; Encoding; Enumeration;  Random sampling}
\begin{document}

\publicationdetails{21}{2019}{2}{2}{5354}

\maketitle

\begin{abstract}
  In this article we consider square permutations, a natural
  subclass of permutations that can be defined in terms of either geometric conditions or pattern avoidance, and convex permutominoes, a related subclass of
  polyominoes. While these two classes of objects arise independently
  in various contexts, they play a natural role in the description of
  certain random horizontal and vertical convex  grid
  configurations.

  We propose a common approach to the enumeration of these two classes
  of objets that allows us to explain the known common form of
  their generating functions, and to derive new refined formulas and
  linear time random generation algorithms for these objects and the
  associated grid configurations.
\end{abstract}
\section{Introduction}
Square permutations and convex permutominoes   are natural subclasses of
permutations and polyominoes that were introduced independently in the
last ten years and have been shown to enjoy remarkably simple and
similar enumerative formulas: their respective generating functions
$\Sq(t)$ and $\Cp(t)$ with respect to the size are
\begin{align}\label{eq:sq}
  \Sq(t)&=\frac{t^2}{1-4t}\left(2+\frac{2t}{1-4t}\right)-\frac{4t^3}{(1-4t)^{3/2}}\\\label{eq:cp}
  \Cp(t)&=\frac{t^2}{1-4t}\left(2+\frac{2t}{1-4t}\right)-\frac{t^2}{(1-4t)^{3/2}}
\end{align}
Equivalently, the number $\Sq_n$ of square permutations with $n$ points and the number $\Cp_n$ of
convex permutominoes with size $n$ are respectively
\begin{align*}
  (n+2)\,2^{2n-5}&\,-\,4(2n-5){2n-6\choose n-3}\\
  (n+2)\,2^{2n-5}&\,-\,(2n-3){2n-4\choose n-2}
\end{align*}

These results were first obtained by Mansour and Severini~\cite{SeTu}
for $\Sq(t)$, then recovered by Duchi and Poulalhon~\cite{DP} and by
Albert \emph{et al} \cite{ALRVW}, while for $\Cp(t)$ they were first
obtained by Boldi \emph{et al} \cite{milanesi} and, independently, by
Disanto \emph{et al} \cite{DiFrPiRi}. Known proofs of these formulas
rely on writing recursive decompositions resulting into linear
equations with one catalytic variable that can be easily solved via
the kernel method. An explicit connection between the two classes of
objects was obtained by Bernini \emph{et al} \cite{jis}, resulting in
a composition relation of the form $\Cp(t)=ISq(t,2)$ where $ISq(t,u)$
is the generating function of certain indecomposable square
permutations counted by their size and number of free fixed points
(see Section~\ref{sec:DefinitionsMotivations} for definitions and
precise statements).

While relatively simple, none of these known proofs explain, as far as
we know, the common shape of the formulas, nor its particular form as
a difference between an asymptotically dominant rational term and a
simple subdominant algebraic term.

By introducing an
injective encoding of square permutations and of convex permutominoes
into a same class of marked words $\cM$, we provide here an
explanation of the common form of the two formulas and generalize them to take into
account natural parameters extending the Narayana refinement for
Catalan numbers.

The rest of the paper is organized as follows. In
Section~\ref{sec:DefinitionsMotivations} we provide an original
motivation for the definitions of convex permutominoes and square
permutations. In Section~\ref{sec:EncodingMainResults} we first
present our encoding and introduce the announced class of marked words
$\cM$ containing the code words and interpreting the rational part of
the formula. We then state our main results and some consequences for
random generation.  The remaining sections are dedicated to the proofs
of the results: In Section~\ref{sec:NonCodingSquares} and
Section~\ref{sec:NonCodingIndec} and
Section~\ref{sec:NonCodingPermutominoes} we describe the subsets of
words of $\cM$ which do not encode square permutations, indecomposable
square permutations, and convex permutominoes respectively. This
allows to conclude the proof of the results stated in
Section~\ref{sec:EncodingMainResults}.

\section{Definitions and motivations}\label{sec:DefinitionsMotivations}
\subsection{Square permutations}
Let  $S$ be a set of points in the plane.  A point of $S$ is
a \emph{upper-right record} if there is no point in $S$ that is both
over it  and to its right. In other terms,
\begin{itemize}
\item $(x,y)\in S$ is a upper-right record of $S$ if for all
  $(x',y')\in S$, either $x'\leq x$ or $y'\leq y$ (or both).
\end{itemize}
One defines similarly \emph{upper-left}, \emph{bottom-left} and
\emph{bottom-right records}. Finally a point of $S$ that is a
record in one of these four directions is called a \emph{exterior
  point} of $S$, otherwise it is an \emph{interior point}.

In the rest of the paper we identify a permutation $\sigma$ of the set
$\{1,\ldots,n\}$ with its geometric representation, the set of points
$R(\sigma)=\{(i,\sigma(i))\mid i=1,\dots,n\}$. In particular for
simplicity, \emph{the point $\sigma(i)$ of the permutation $\sigma$} stands
for \emph{the point $(i, \sigma(i))$ of the geometric representation of
the permutation $\sigma$}.

Observe that the records
of $R(\sigma)$ correspond to the records of $\sigma$ in the usual
combinatorial sense: upper-right records of $R(\sigma)$ are right-left
maxima of $\sigma$, bottom-right records are right-left minima, and so
on for other types of records.

\begin{figure}
   \begin{center}
     \includegraphics[scale=.9]{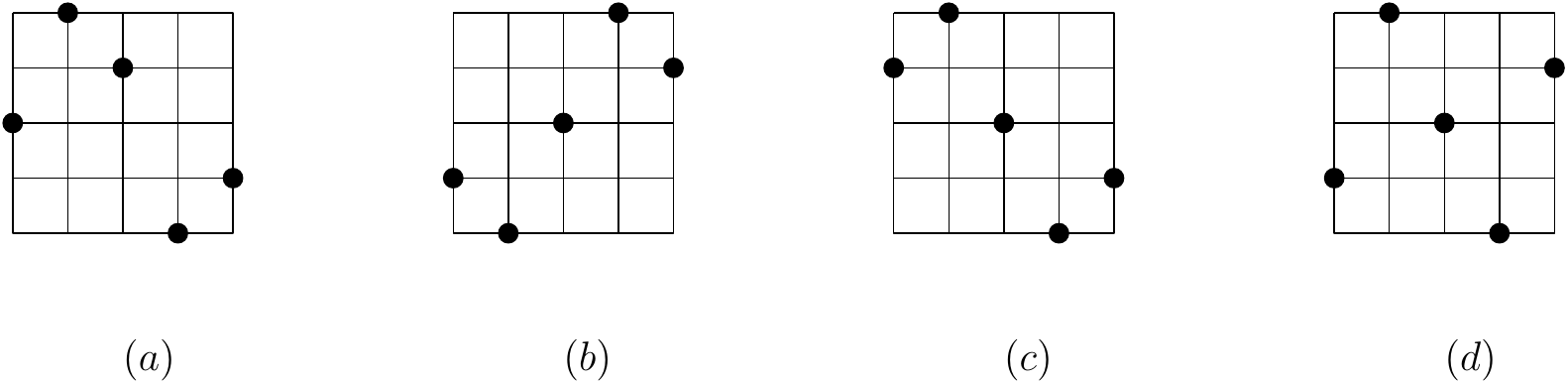}
   \end{center}
  \caption{$(a)$ The square permutation $3,5,4,1,2$. $(b)$ A square
    permutation with an upper-left record which is also a bottom-right
    record. $(c)$ A square permutation with an upper-right record which is
    also a bottom-left record. $(d)$ A permutation that is not
    square.}
  \label{Fig:Squares}
\end{figure}

Then we have the following definition  (see Figure~\ref{Fig:Squares}):
\begin{itemize}
\item A permutation is a \emph{square permutation} if its geometric
  representation has no interior point.
\end{itemize}
The terminology square permutation arises from the following path
representation: given a permutation, connect the successive upper-left
records from left to right to form the \emph{left-upper path} of $\sigma$, and define accordingly the right-upper path, right-lower path and left-lower path. Then an equivalent definition is the following:
\begin{itemize}
\item A \emph{square permutation} is a permutation such that each point belongs to the left-upper,  right-upper,   right-lower
 or  right-upper path.
\end{itemize}

The following two subclasses of square permutations play an important role in our approach: 
\begin{itemize} 
\item A {\em
  triangular permutation} is a square permutation such that each point
belongs either to the left-upper path, the right-upper path, or the right-lower
path (see Figure~\ref{Fig:gridperm}~(a)).

\item A {\em parallel permutation} is a square permutation such that all
points belong to the left-upper or the right-lower path (see Figure~\ref{Fig:gridperm}~(b)).
\end{itemize}

\begin{figure}
\centerline{
     \includegraphics[scale=1]{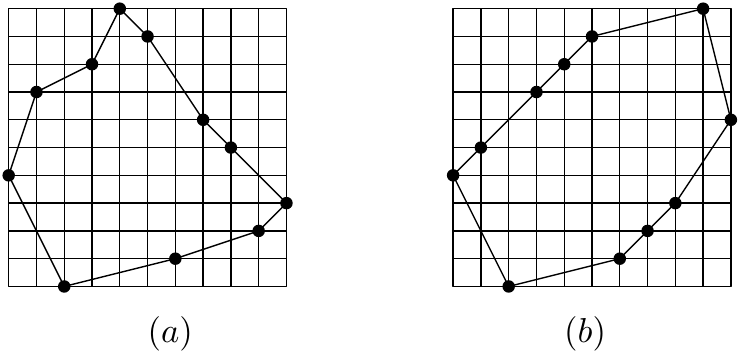}
}   \caption{(a) A triangular permutation. (b) A parallel permutation.}
   \label{Fig:gridperm}
\end{figure}

The {\emph size of a permutation} is its number of points. We let
$\mathcal{S}_q$ denote the class of square permutations and
${\mathcal{S}_q}_n$ denote the number of square permutations of size
$n$. We let 
$\cT$
denote the class of triangular permutations and
$\cT_n$
denote the number of triangular permutations of size $n$. We
let 
$\cP$
denote the class of parallel permutations and
$\cP_n$
denote the number of parallel permutations of size $n$.

\begin{remark}
  We would like to point out that:
\begin{itemize}
  \item Square permutations can be characterized in terms of pattern avoidance, in particular Waton proved~\cite{Waton} that they are all the permutations avoiding the following sixteen patterns of length five:
  $$
 \left\{ \begin{array}{llllllll}
    14325 & 14352 & 15324 & 15342 &24315 & 24351 & 25314 &25341 \\
    41325 & 41352 & 42315 & 42351 & 51324 & 51342 & 52314 & 52341 \\
  \end{array}\right\}
 $$
\item Triangular permutations are all the permutations avoiding the following four patterns of length four:
  $$
  \left\{
  \begin{array}{llll}
3214 & 3241 & 4213 & 4231
  \end{array}
  \right\}
  $$
\item Parallel permutations are permutations avoiding $321$, indeed they can be seen as two interleaved increasing sequences of points.
 \end{itemize} 
\end{remark}

\subsection{Convex permutominoes}
Like for square permutations, we use an approach in terms of points in the plane to  introduce convexity in the context of
self-avoiding polygons:
\begin{itemize}
\item A closed self-avoiding walk on the square lattice is a
  \emph{convex polygon} if, as set of points, it has no interior point. 
\end{itemize}
This notion of convexity for polygon is equivalent to the one used in
the context of polyomino enumeration (see \cite{BMBM}, and more
precisely \cite{BMG} for convex polygons).

The \emph{turnpoints} of a polygon on the square lattice are the
vertices of the lattice at which the underlying walk changes
direction. The \emph{sides} of a polygon are the sequences of steps
between turnpoints: any polygon on the square lattice has the same
number of sides and turnpoints and they are both even. The \emph{upper
  walk} of a convex polygon is the sub-walk of the polygon starting
from the highest of its leftmost points with a horizontal step and
ending at the highest of its rightmost points with a horizontal step.
An \emph{upper side} of a polygon is a horizontal side that belongs to
the upper walk of the polygon. One defines similarly the \emph{left
  walk} and the \emph{left sides} of a polygon.

We are interested in the subclass of polygons that are ``generic''
in the sense that they do not have two sides in the same column or
line, and ``reduced'' in the sense that each line or column that they
intersect contains a side:
\begin{itemize}
\item A polygon $P$ on the square lattice is a \emph{permutomino} of size
  $n$ if each line of abscissa or ordinate $i$ contains exactly one
  side of $P$ for $i=0,\ldots, n$.
\end{itemize}
Finally a \emph{convex permutomino} is a permutomino which is convex (see Figure~\ref{Fig:Permutomino}).
 Equivalently:
  \begin{itemize}
  \item A convex permutomino is a permutomino such that each point of the underlying walk is a record.
  \end{itemize}
  In fact it is clearly enough to check that each turnpoint is a record. 
\begin{figure}
 
   \begin{center}
     \includegraphics[scale=.8]{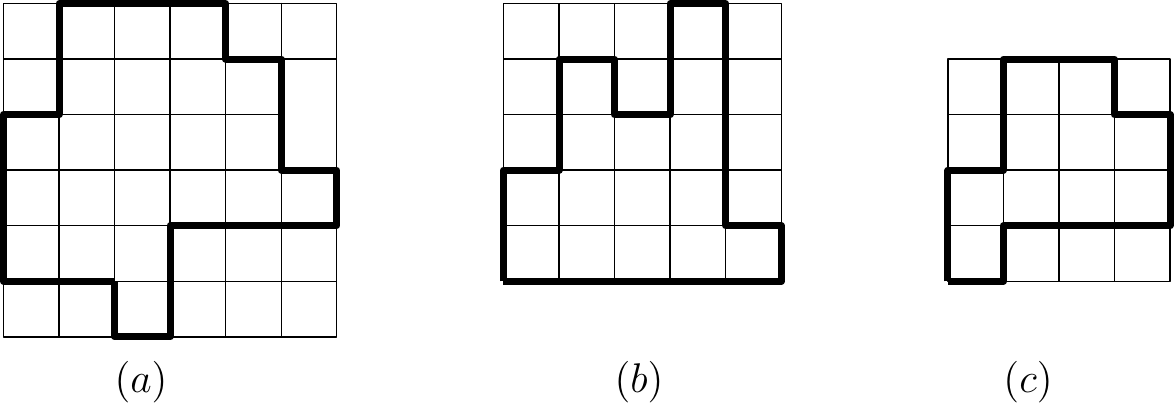}
   \end{center}
   \vspace{-1em}
  \caption{$(a)$ A convex permutomino of size $7$. $(b)$ A permutomino that is not convex. $(c)$ A convex polygon that is not a permutomino.}\vspace{-1em}
  \label{Fig:Permutomino}
\end{figure} 
As for square permutations, two subclasses of convex polygons will be of particular interest:
\begin{itemize} 
\item A {\em directed convex permutomino} is a permutomino such that each turnpoint is  either a left-upper, a right-upper, or a right-lower record (see Figure~\ref{Fig:permutominoesbis}~(a)).

\item A {\em parallelogram permutomino} is a permutomino such that each turnpoint  
is either a left-upper or a right-lower record (see Figure~\ref{Fig:permutominoesbis}~(b)).
\end{itemize}

\begin{figure}[tb]
\begin{center}
\includegraphics[width=.4\textwidth]{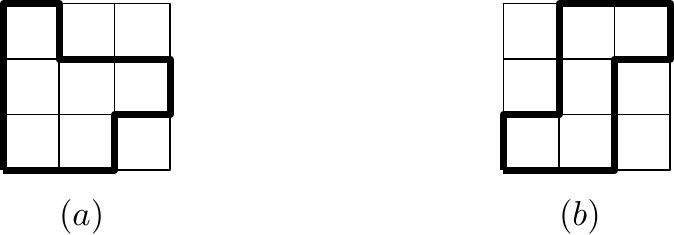}
\end{center}
\caption{$(a)$ a directed convex permutomino; $(b)$ a   parallelogram permutomino.} \label{Fig:permutominoesbis}
\end{figure}
\begin{remark}
  We would like to point out that permutominoes can be viewed as collections of unit squares lattice cells glued together such that the interior is connected, that is, they are a subclass of polyominoes. 
\end{remark}
We let $\mathcal{C}_p$ denote the class of convex permutominoes and  ${\mathcal{C}_p}_n$ the number of convex permutominoes of size $n$. We  let $\mathcal{D}_p$ denote the class of directed convex permutominoes and ${\mathcal{D}_p}_n$ denote the number of directed convex permutominoes of size $n$. We  let  $\mathcal{P}_p$ be the class of parallelogram permutominoes and ${\mathcal{P}_p}_n$ the number of parallelogram permutominoes of size $n$.
Finally, we let  $D_p(t)$ and $P_p(t)$ denote respectively  the generating functions of triangular and parallelogram permutominoes.
\paragraph{Square permutations versus convex permutominoes}
It is known that, besides the similarities between Formulas~\eqref{eq:sq} and~\eqref{eq:cp} for square permutations and convex permutominoes, the parallel between these two classes  extends also  to their subclasses. In particular, we have that:
$$ T(t)=B(t) \quad \textrm{ and } \quad D_p(t)=\frac{B(t)}{2}
\qquad \textrm{ where } \quad
  B(t)=\frac{1}{\sqrt{1-4t}}
$$
  is the generating function of central binomial coefficients, and
  \[
 P(t)=P_p(t)=C(t)
 \qquad\textrm{  where }\quad
    C(t)=\frac{1-2t-\sqrt{1-4t}}{2t}
    \]
is the generating function of Catalan numbers.

\subsection{A link with random grid configurations.}
The relevance of the number $Sq_n$ of square permutations and $Cp_n$ of
convex permutominoes is highlighted by the following propositions
(see Figure~\ref{Fig:PandTPGrille}):

\begin{proposition}
The number $E_{M,N}(n)$ of configurations of $n$ exterior points on
a $M\times N$ grid satisfies
\[
E_{M,N}(n)\mathop{\sim}_{M,N\to\infty}Sq_{n}\cdot {M\choose n}{N\choose n}.
\]
\end{proposition}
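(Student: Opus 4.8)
The plan is to read $E_{M,N}(n)$ as counting the $n$-element point sets $S\subseteq\{1,\dots,M\}\times\{1,\dots,N\}$ all of whose points are exterior points of $S$ — equivalently, the point sets $S$ with no interior point — and then to split this count according to whether $S$ is \emph{generic}, by which I mean that its $n$ points have pairwise distinct abscissas and pairwise distinct ordinates. I expect the generic sets to contribute exactly $Sq_n\binom{M}{n}\binom{N}{n}$, and the non-generic ones to be of strictly smaller order as $M,N\to\infty$. Since $\binom{M}{n}\binom{N}{n}\sim M^nN^n/(n!)^2$ for $n$ fixed, these two facts together give the asymptotic estimate of the statement.

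For the generic sets, the key observation is that a triple consisting of an $n$-subset of columns $R=\{a_1<\dots<a_n\}$, an $n$-subset of rows $C=\{b_1<\dots<b_n\}$, and a permutation $\sigma$ of $\{1,\dots,n\}$ determines the generic set $\{(a_i,b_{\sigma(i)}):1\le i\le n\}$, and every generic set is obtained in this way from exactly one such triple. The bijection $(i,j)\mapsto(a_i,b_j)$ carries $R(\sigma)$ onto this set and is order-preserving in each coordinate separately; since being a record in any of the four directions is defined purely by strict inequalities between the coordinates of the points, this bijection sends exterior points to exterior points and interior points to interior points. Hence the generic set has no interior point if and only if $\sigma$ has none, i.e. if and only if $\sigma\in\cSq$. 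Consequently the number of generic sets counted by $E_{M,N}(n)$ is exactly $Sq_n\binom{M}{n}\binom{N}{n}$.

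It then remains to bound, crudely and without using the exterior-point condition, the number of non-generic sets. If two of the $n$ points share a column, choosing that pair ($\binom{n}{2}$ ways), their common column ($M$ ways), their two ordinates (at most $N^2$ ways), and the remaining $n-2$ points (at most $(MN)^{n-2}$ ways) shows there are $O(M^{n-1}N^n)$ such sets; symmetrically there are $O(M^nN^{n-1})$ sets having two points in a common row. Both quantities are $o\bigl(\binom{M}{n}\binom{N}{n}\bigr)$, so $E_{M,N}(n)=Sq_n\binom{M}{n}\binom{N}{n}+o\bigl(\binom{M}{n}\binom{N}{n}\bigr)$, which is the proposition. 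I expect the only step requiring genuine care to be the order-preservation argument above: one must make sure that relabeling each axis monotonically really does preserve every point's status as a record or non-record, so that ``square permutation'' on one side corresponds exactly to ``configuration with no interior point'' on the other; the remaining counting estimates are routine.
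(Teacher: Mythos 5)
Your proposal is correct and takes essentially the same route as the paper: decompose a generic configuration uniquely into its reduced square-permutation shape plus a choice of $n$ columns and $n$ rows, and observe that non-generic configurations are asymptotically negligible. You simply supply more detail than the paper's terse argument (the explicit order-preserving relabeling and the crude $O(M^{n-1}N^n)+O(M^nN^{n-1})$ bound), which is a faithful filling-in rather than a different approach.
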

\begin{proof}
  Observe that the probability to have a non generic configuration, that is a configuration with two or more points belonging to the same column or row, goes to $0$. Indeed  $M$ and $N$ tends to infinity unlike the number of points $n$.
  Given a generic configuration of $n$ exterior points on a grid $M\times N$ it can be uniquely decomposed into a square permutation giving its reduced shape and a selection of $n$ of the $N$ columns and $n$ of the $M$ rows that are used by the points. 
  \end{proof}

A similar result holds for convex polygons on the square lattice with
a fixed number of turnpoints.
\begin{proposition}
The number $C_{M,N}(n)$ of convex polygons with $2n$ turnpoints on a
$M\times N$ grid satisfies
\[
C_{M,N}(n)\mathop{\sim}_{M,N\to\infty}Cp_{n}\cdot {M\choose n}{N\choose n}.
\]
\end{proposition}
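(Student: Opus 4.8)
The plan is to mimic exactly the argument just given for square permutations, replacing the reduction step with the one appropriate to polygons. First I would argue that, as $M,N\to\infty$ with $n$ fixed, the proportion of convex polygons with $2n$ turnpoints whose turnpoint set is \emph{not} generic — meaning two turnpoints share a column or a row — tends to $0$; indeed the number of such degenerate configurations is of lower order in $M$ and $N$ because it forces at least one coincidence among a bounded number of coordinates, while the count of generic ones grows like a product of binomials. So asymptotically we may restrict attention to convex polygons with $2n$ turnpoints in generic position.

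Next I would set up the decomposition. Given a generic convex polygon with $2n$ turnpoints on the $M\times N$ grid, the $n$ distinct abscissae and $n$ distinct ordinates used by its turnpoints single out $n$ of the $N$ columns and $n$ of the $M$ rows. Collapsing the grid onto exactly those chosen lines — i.e. taking the ``reduced shape'' — produces a convex polygon on an $n\times n$ grid in which every one of the $n+1$ vertical lines and every one of the $n+1$ horizontal lines carries a side; by the definition recalled in Section~\ref{sec:DefinitionsMotivations} this is precisely a convex permutomino of size $n$. Conversely, a convex permutomino of size $n$ together with an increasing choice of $n$ columns among $N$ and $n$ rows among $M$ reconstitutes a unique generic convex polygon with $2n$ turnpoints: the turnpoints go to the selected lines, and convexity together with the record property is preserved under this monotone rescaling since it only depends on the relative order of coordinates, not on their exact values.

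This bijection between generic convex polygons with $2n$ turnpoints on the $M\times N$ grid and triples (convex permutomino of size $n$, $n$-subset of $[N]$, $n$-subset of $[M]$) gives exactly $Cp_n\cdot\binom{N}{n}\binom{M}{n}$ such polygons, and combined with the negligibility of the non-generic ones this yields the claimed asymptotic equivalence.

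I expect the only genuinely delicate point to be checking that the reduced shape of a generic convex polygon is again a bona fide permutomino, and in particular that it is still convex — one must make sure no interior point is created when lines are collapsed. This is where the record characterization is useful: a turnpoint is a record exactly when no other turnpoint lies weakly north-east (resp. in the other three quadrants) of it, and this condition is invariant under any coordinate-wise increasing map, so convexity transfers in both directions. The matching count of sides per line is immediate from genericity plus the fact that the polygon already used each chosen line. Everything else is the same bounded-versus-unbounded counting estimate as in the square-permutation case, so I would keep the write-up as short as the proof of the preceding proposition.
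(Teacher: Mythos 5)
Your proposal is correct and follows essentially the same route as the paper's own proof: show non-generic polygons (two turnpoints sharing a row or column) are asymptotically negligible, then decompose each generic convex polygon with $2n$ turnpoints into its reduced shape (a convex permutomino of size $n$) together with a choice of $n$ of the $N$ columns and $n$ of the $M$ rows. Your extra verification that convexity is preserved under the monotone collapse (via the record characterization) is a welcome detail the paper leaves implicit, but it is not a different argument.
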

\begin{proof}
  Observe that the probability to have a non generic polygon, with two or more turnpoints belonging to the same column or row, goes to $0$. Indeed  $M$ and $N$ tends to infinity unlike the number of turnpoints $n$.
  Given a generic convex polygon it can be uniquely decomposed into a convex permutomino giving its reduced shape and a selection of $n$ of the $N$ columns and $n$ of the $M$ rows that are used by the turnpoints. 
\end{proof}

\begin{figure}
   \begin{center}
     \includegraphics[scale=.7]{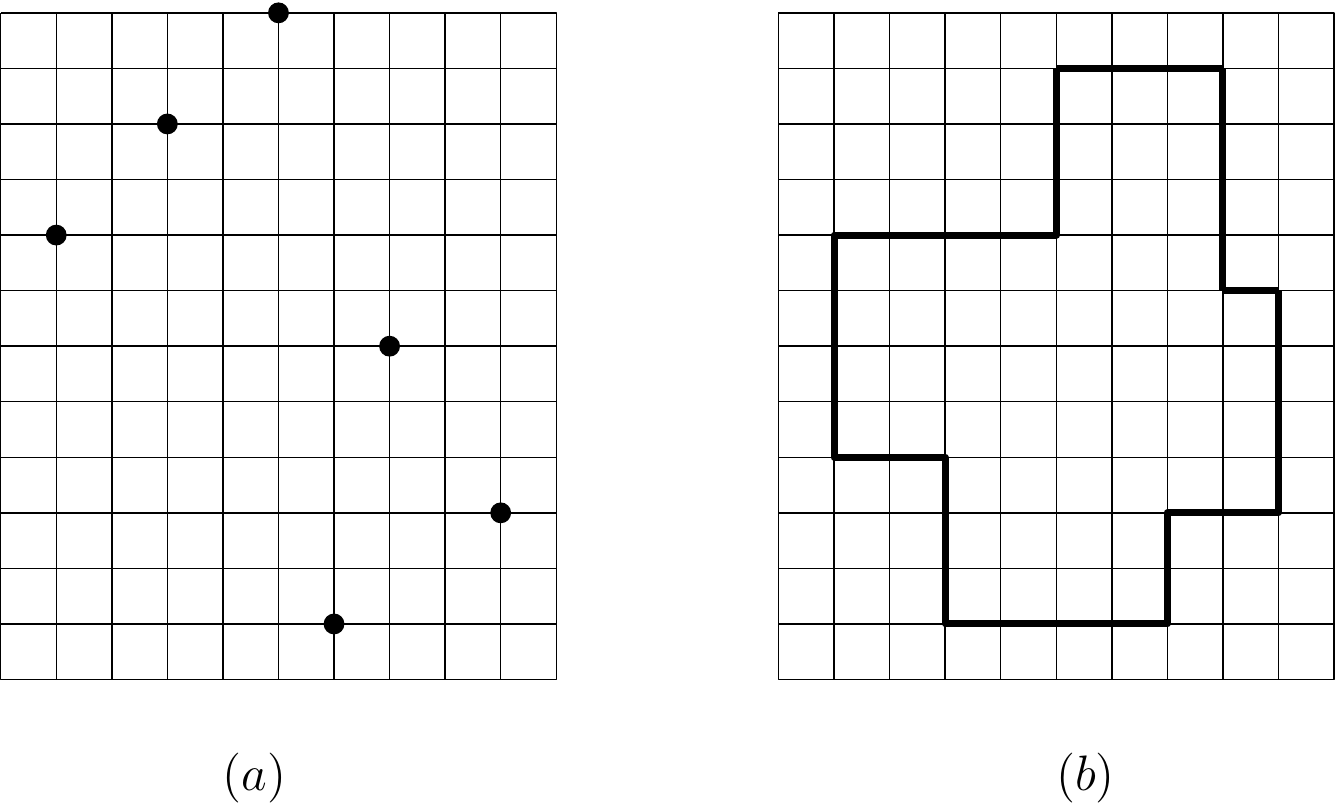}
   \end{center}
  \caption{$(a)$ A set of 6 exterior points on a grid of size $M\times N$. $(b)$ A convex polygon with 12 turnpoints on a grid of size $M\times N$.}
  \label{Fig:PandTPGrille}
\end{figure} 

\subsection{Convex permutominoes and colored square permutations}
A permutation $\sigma$ of $\{1,\ldots,n\}$ is \emph{decomposable} if
there exist two permutations $\pi$ of $\{1,\ldots,k\}$ and $\pi'$ of
$\{1,\ldots,\ell\}$ with $k+\ell=n$ such that $\sigma$ in vector
notation is the permutation $\pi_1,\pi_2,\ldots,\pi_k,
\pi'_1+k,\pi'_2+k,\ldots,\pi'_\ell+k$.  It is \emph{co-decomposable}
if there exist two permutations $\pi$ of $\{1,\ldots,k\}$ and $\pi'$
of $\{1,\ldots,\ell\}$ with $k+\ell=n$ such that $\sigma$ is the
permutation
$\pi_1+\ell,\pi_2+\ell,\ldots,\pi_k+\ell,\pi'_1,\pi'_2,\ldots,\pi'_\ell$. A
permutation is \emph{indecomposable} if it is not decomposable,
\emph{co-indecomposable} if it is not co-decomposable, and \emph{fully
  indecomposable} if it is both indecomposable and co-indecomposable.

A \emph{fixed point} of a permutation $\sigma$ is an entry $i$ such that
$\sigma(i)=i$. It is a \emph{free fixed point} if the corresponding point is
not a bottom-left or upper-right record. Let $f(\sigma)$ denote the number of free fixed points of $\sigma$.

A \emph{colored permutation} is a permutation with a (possibly empty) subset of its free fixed points that are colored.
In particular a colored square permutation is a colored permutation whose underlying permutation is square, and we
identify standard permutations with colored permutations without colored fixed point.
\begin{theorem}(Bernini \emph{et al} \cite{jis})\label{Berni}
  There is a bijection $\phi$ between
  \begin{itemize}
  \item convex permutominoes of size $n$, and
  \item colored square permutations of size $n$ that are co-indecomposable.
  \end{itemize}
\end{theorem}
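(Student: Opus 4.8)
The plan is to construct the bijection $\phi$ explicitly from the geometry of convex permutominoes, reading off a colored square permutation by recording, for each of the $n$ columns, the relevant vertical data of the permutomino. Recall that a convex permutomino $P$ of size $n$ lives on the grid $\{0,\dots,n\}^2$, and each vertical line $x=i$ for $i=0,\dots,n$ carries exactly one vertical side of $P$; likewise each horizontal line $y=j$ carries exactly one horizontal side. First I would set up coordinates: label the vertical side on the line $x=i$ by the set of integer ordinates it spans, and observe that convexity forces these vertical sides to first ``grow'' (their spans increasing as $i$ increases from $0$) up to a maximal stretch and then ``shrink'', and symmetrically in the horizontal direction. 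This is just a restatement of the four-monotone-path description of a convex polygon given in the excerpt (upper/lower and left/right walks), specialized to the permutomino condition that each line is used exactly once.

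The key step is to define, for $i=1,\dots,n$, a value $\sigma(i)\in\{1,\dots,n\}$ from the pair of consecutive vertical sides on lines $x=i-1$ and $x=i$ together with the horizontal side that joins them. Concretely, the $i$-th column $[i-1,i]\times\mathbb{R}$ of the grid contains a unique horizontal side of $P$ (the one lying on some line $y=j$), and I would tentatively set $\sigma(i)=j$ or a suitable shift thereof, checking that this assignment is a permutation of $\{1,\dots,n\}$ — this uses precisely that each horizontal line carries exactly one horizontal side. I would then verify that the resulting permutation is square: a point of $R(\sigma)$ that were interior would correspond to a column whose horizontal side lies strictly between the extreme horizontal runs on both sides, which convexity of $P$ forbids, since the horizontal sides on the upper walk are weakly decreasing after the peak and those on the lower walk weakly increasing, leaving no room for an interior point. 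The co-indecomposability of $\sigma$ should come out as the statement that $P$ cannot be split by a vertical line into two sub-permutominoes stacked corner-to-corner; decomposability of $\sigma$ in the ``co-'' sense is exactly such a splitting.

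The coloring is where the real content lies, and I expect it to be the main obstacle. The issue is a counting mismatch: a convex permutomino of size $n$ is not literally a square permutation of size $n$, because at a turnpoint the walk makes a genuine corner, and there is a two-fold (or more) ambiguity at certain columns — typically at columns where the permutomino's boundary has a fixed point of the underlying permutation that is not forced to be a corner record. I would analyze exactly which columns $i$ admit two possible local configurations of $P$ giving the same $\sigma(i)$; I expect these to be precisely the free fixed points of $\sigma$, i.e. the fixed points $\sigma(i)=i$ whose point is neither a bottom-left nor an upper-right record. Recording the actual choice made by $P$ at each such column as a color on that free fixed point then upgrades $\sigma$ to a colored square permutation, and I would argue bijectivity by exhibiting the inverse: from a co-indecomposable colored square permutation, rebuild the four monotone staircases of the permutomino column by column, using the colors to resolve the local ambiguities, and check that the resulting closed walk is self-avoiding and convex. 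Verifying that the inverse is well-defined — that the reconstructed staircases actually close up into a single self-avoiding convex polygon, with co-indecomposability of $\sigma$ guaranteeing connectedness rather than a disjoint union — is the delicate point, and I would handle it by induction on $n$ via the decomposition structure, or alternatively by directly checking the monotonicity inequalities among the reconstructed side positions.
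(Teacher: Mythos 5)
Your construction is not the one the paper uses, and it has a genuine flaw at its foundation. The paper (following Bernini \emph{et al}) defines $\phi$ by $2$-coloring the \emph{turnpoints} of $P$ alternately black and white along the boundary, starting from the bottom turnpoint of the leftmost side, and letting $\phi(P)$ be the permutation formed by the black turnpoints, a fixed point being colored or not according to whether it lies on the upper or the lower walk. Your column-by-column reading, by contrast, starts from a definition that does not make sense: in a convex permutomino every vertical strip between $x=i-1$ and $x=i$ is bounded both above and below by a horizontal boundary edge, so there is no ``unique horizontal side of $P$'' met by column $i$ --- there are two, one on the upper walk and one on the lower walk (and if ``contains'' is meant as containing an entire side, most columns contain none). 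Even after fixing a choice, say the upper edge, the map sending $i$ to the height of that edge is a weakly unimodal sequence in which a horizontal side of length $k$ repeats the same value over $k$ consecutive columns; it is not a permutation. A permutation can only be read off from the corners, and only from every other corner at a time, which is precisely what the alternating black/white coloring accomplishes; so the very definition of $\sigma$ in your plan has to be replaced, not merely polished.

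Beyond this, the two points that carry the actual content of the theorem are left as expectations rather than arguments: (i) that the information lost in passing from $P$ to the underlying square permutation is exactly a binary choice at each free fixed point --- in the genuine bijection this is the statement that the white turnpoints are determined by the black ones except at fixed points of $\sigma$, which may sit on the upper or the lower walk, and this identification is asserted (``I expect these to be precisely the free fixed points'') but never proved; and (ii) that the inverse reconstruction from a co-indecomposable colored square permutation closes up into a single self-avoiding convex polygon meeting each horizontal and vertical line in exactly one side, which you yourself call the delicate point and defer to an unspecified induction. As it stands the proposal is an outline whose key step is ill-defined and whose substantive verifications are missing.
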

To obtain the colored permutation associated to a convex permutomino
$P$, one should first color the bottom turnpoint on the leftmost edge
of $P$ in black, and then alternatively color all other turnpoints of
$P$ in white or black, following the boundary of $P$ in clockwise
order, so that adjacent turnpoints have different colors. Then
$\phi(P)$ is the permutation represented by black points, with fixed
points that belong to the upper walk of $P$ as colored fixed points
(resp. belonging to the lower boundary of $P$ as non-colored fixed
points). See an example in Figure~\ref{Fig:BijSquarePermu}.

\begin{figure}
   \begin{center}
     \includegraphics[scale=.9]{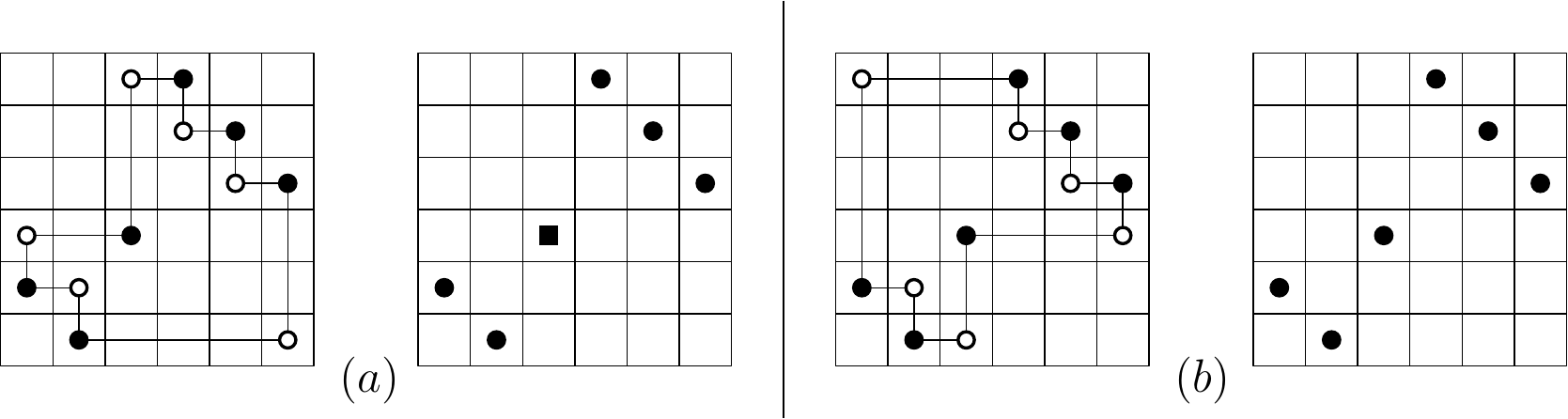}
   \end{center}
   \caption{$(a)$ A permutomino and its corresponding square permutation with a colored fixed point (represented with a little square). $(b)$ A permutomino and its corresponding  square permutation with a non-colored fixed point.}
  \label{Fig:BijSquarePermu}
\end{figure} 
As announced in the introduction, this theorem leads to a relation
between the generating function of convex permutominoes with respect to the size and
the generating function of co-indecomposable square permutations with respect to the
size and number of free fixed points. However, this relation does not
explain the similarity between Formulas~\eqref{eq:sq}
and~\eqref{eq:cp}.

\section{Main results}\label{sec:EncodingMainResults}

\subsection{The horizontal-vertical encoding}
We now define the horizontal-vertical encoding of a colored square 
permutation. We will later study this encoding for three subsets of
colored square permutations: square permutations (that is, colored square
permutations without colored points), indecomposable square
permutations, and colored co-indecomposable square permutations (which are in bijection with convex permutominoes as previously discussed).

Let us say that a point $(x,y)$ in a set $S$ is an \emph{upper point} if it is
a upper-left or a upper-right record of $S$, and it is a \emph{left point} if it is a
upper-left or bottom-left record of $S$.
Given a colored permutation $\sigma$, we denote by
$H(\sigma)=Xu_2 \ldots u_{n-1}X$ the \emph{horizontal profile} of $\sigma$,
where
$$
u_i= 
\begin{cases}U,& \text{if }  \sigma(i) \text{ is an upper point that is not colored;}\\
D,& \textrm{otherwise}\\
\end{cases}
$$ with $i \in \{2,\ldots, n-1\}$, and the letter $X$ codes for the
horizontal projection of extremal points of $\sigma$, and we denote by
$V(\sigma)= Y v_2 \ldots v_{n-1}Y$ the \emph{vertical profile} of $\sigma$,  where
$$
v_j=
\begin{cases}
L,& \text{if } (\sigma^{-1}(j),j) \text{ is a left point that is not colored;}\\ R,& \text{otherwise;}
\end{cases}
$$ with $j \in \{2,\ldots, n-1\}$, and the letter $Y$ codes for the
vertical projection of extremal points of $\sigma$.

\begin{figure}
   \begin{center}
     \includegraphics[scale=.8]{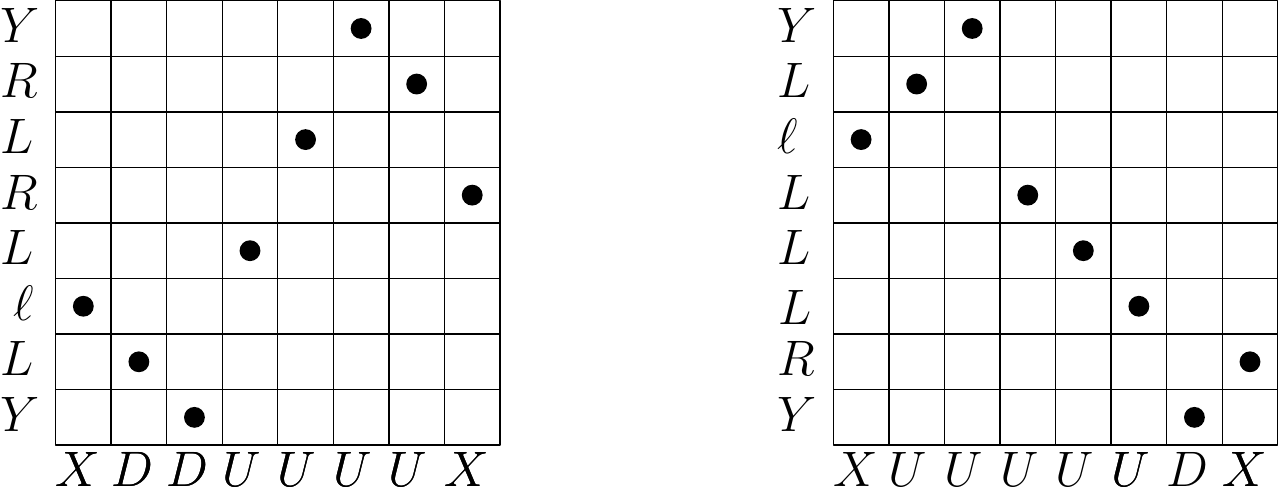}
   \end{center}
   \vspace{-1em}
   \caption{Two square permutations of size 8 whose marked biwords are respectively
     $(X,Y)(D,L)(D,L)(U,L)(U,R)(U,L)(U,R)(X,Y)$ with $m=3$
     and $(X,Y)(U,R)(U,L)(U,L)(U,L)(U,L)(D,L)(X,Y)$ with $m=6$.}
  \label{Fig:CodeSquare}
  \vspace{-1em}
\end{figure}

Finally let the {\em code} of a permutation $\sigma$ be the triple
$(H(\sigma),V(\sigma),\sigma(1))$: we will see later that the encoding
$\sigma \mapsto (H(\sigma),V(\sigma),\sigma(1))$ is indeed injective for the families of permutations we are interested in.

In order to state our results let us consider the following set
of words:
  Let $\cW=\cA^*$ denote the set of (bi)words on the alphabet
  $\cA=\{U,D\}\times\{L,R\}$, and let $\cM$ denote the set of marked words $(w,m)$ consisting of
  \begin{itemize}
  \item a word $w=(u_1,v_1)\cdots(u_n,v_n)\in\{(X,Y)\}\cdot\cW\cdot\{(X,Y)\}$
  \item and a mark $m$ with $1\leq m\leq n$ and $v_m\in\{L,Y\}$.
  \end{itemize}
  
Observe then that the triple $(H(\sigma),V(\sigma),\sigma(1))$ can
alternatively be viewed as a marked biword $(w,m)$ of $\cW$ by taking
$w=(X,Y)(u_2,v_2)\ldots(u_{n-1},v_{n-1})(X,Y)$ and $m=\sigma(1)$
where, by construction, the mark $m$ is on a letter $L$ or $X$ of the
vertical profile (see Figure~\ref{Fig:CodeSquare} for an example,
where the marked letter $L$ is represented by $\ell$).

The main contribution of this paper is  to prove that the above  horizontal/vertical encoding of square permutations and  of convex permutominoes  by words of $\cM$, is  injective and to describe the complementary sets of \emph{non-coding words}, that is, the elements of $\cM$ that are not codewords in both cases:
\begin{theorem}\label{thm:main}
  The horizontal/vertical encoding defines bijections 
  \begin{align*}
    \cSq&\equiv\cM\;
    -\;\cTSsw\cdot\{(D,L),(D,R)\}\cdot\cW\cdot\{(X,Y)\}\;
    -\;\cTSnw\cdot\{(U,R),(D,L)\}\cdot\cW\cdot\{(X,Y)\}\\ \\
    \cCp&\equiv\cM\;
    -\;\cDCsw\cdot\{(D,L)\}\cdot\cW\cdot\{(X,Y)\}\;
    -\;\cDCnw{}^+\cdot\cW\cdot\{(X,Y)\}
  \end{align*}
  where $\cTSsw$ and $\cTSnw$ are 
  languages encoding variants of
  \emph{triangular permutations} (namely permutations obtained from triangular permutations by a $180^o$ or $90^o$ rotation respectively), while $\cDCsw$ and $\cDCnw{}^+$ are 
  languages encoding
  variants of \emph{directed convex permutominoes} (namely permutations obtained respectively from colored triangular permutations by a $180^o$ rotation or from indecomposable triangular permutations by a $90^o$ rotation respectively).
\end{theorem}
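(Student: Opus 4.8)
The plan is to establish both bijections in three stages, following the organisation of Sections~\ref{sec:NonCodingSquares}, \ref{sec:NonCodingIndec} and~\ref{sec:NonCodingPermutominoes}. First I would show that the horizontal/vertical encoding $\sigma\mapsto(H(\sigma),V(\sigma),\sigma(1))$ is injective on colored square permutations, by exhibiting an explicit decoding procedure. Then, for each of the three families that matter — square permutations, indecomposable square permutations, and (through the bijection $\phi$ of Theorem~\ref{Berni}) co-indecomposable colored square permutations, which are in bijection with convex permutominoes — I would determine the image of the encoding by characterising the complementary set of \emph{non-coding words} of $\cM$, and check that it is the language subtracted in the statement.

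For injectivity, I would reconstruct $\sigma$ from $(H(\sigma),V(\sigma),\sigma(1))$ by a left-to-right sweep of the biword, processing the columns in order while maintaining a \emph{frontier} made of the last point placed on each of the four monotone paths (left-upper, right-upper, left-lower, right-lower). The point of column~$1$ sits at height $m=\sigma(1)$ and starts both left paths. Reading the letters attached to column~$i$ together with the vertical-profile letter at the corresponding value, the choice $U$ versus $D$ says whether the new point extends the upper or the lower walk, the choice $L$ versus $R$ says whether it extends a left or a right path, and the frontier then forces its height: on a left path it is the next available value compatible with monotonicity, on a right path it is squeezed between the previous right-path point and the opposite walk. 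The cases to check are the four letter pairs crossed with the frontier states (before or after the maximum has been reached on the upper walk, before or after the minimum on the lower walk), plus the bookkeeping for the boundary letters $(X,Y)$ and the convention that a colored free fixed point receives $D$ (resp.\ $R$) although it is geometrically an upper (resp.\ left) point. The procedure is deterministic, giving injectivity, and, run on a word that is not excluded in the next step, it returns a genuine object of the family in question, which will give surjectivity onto the claimed complement.

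To identify the non-coding words I would run the same decoding on an arbitrary $(w,m)\in\cM$ and locate the first column where it breaks down. For square permutations, the breakdown is exactly that the letter read would force the new point to be an interior point, because the room left on the upper walk (for a $D$-type obstruction) or on the lower walk (for a $U$-type obstruction) has been used up by the prefix read so far. The crux is then to show that such a blockage occurs if and only if that prefix is itself the code of a square permutation whose points lie on only three of the four paths — a triangular permutation — read in a rotated frame: this is where one matches the path (equivalently pattern-avoidance) description of triangular permutations recalled in Section~\ref{sec:DefinitionsMotivations} with the structure of blocked prefixes, producing the decompositions $\cTSsw\cdot\{(D,L),(D,R)\}\cdot\cW\cdot\{(X,Y)\}$ and $\cTSnw\cdot\{(U,R),(D,L)\}\cdot\cW\cdot\{(X,Y)\}$, and where one also checks that conversely any word of either form blocks and that the two families are disjoint enough for the subtraction to be exact. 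For convex permutominoes I would transport the question through $\phi$ to co-indecomposable colored square permutations and re-run the analysis; the new phenomena are that a colored fixed point may be created or forbidden and that co-indecomposability has to be maintained, and the blocked prefixes become codes of directed convex permutominoes, which up to rotation are colored triangular permutations for the $\cDCsw$-term and indecomposable triangular permutations for the $\cDCnw{}^+$-term — this is the point where the intermediate result on indecomposable square permutations from Section~\ref{sec:NonCodingIndec} is invoked.

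I expect the last identification to be the main obstacle. Checking that a given word fails to encode a square permutation or a convex permutomino is routine once the decoding is in place, but proving that the set of all such words is exactly the stated product of languages requires both inclusions — every blocked prefix is the code of a (rotated, possibly colored, possibly indecomposable) triangular or directed-convex object, and every such code followed by one of the listed letters and an arbitrary tail really does block — and the delicate parts are the behaviour at the two corners of the square, where the maximum and the minimum are reached, and the interplay between the coloring of fixed points and co-indecomposability, since flipping a single letter there moves a word from one of the four excluded languages to another.
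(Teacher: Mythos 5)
Your plan is essentially the paper's own proof: the paper establishes the theorem by the same left-to-right decoding algorithm (Section~\ref{sec:NonCodingSquares}, Proposition~\ref{words}), whose determinism gives injectivity and whose blocked runs are shown to decompose exactly as a rotated (possibly standardized) triangular prefix, one of the listed letter pairs, and a free tail, with the permutomino case handled in Section~\ref{sec:NonCodingPermutominoes} by the same decoder with modified stopping conditions on the colored co-indecomposable side of the bijection $\phi$. Apart from minor organizational differences (the paper does not route the $\cDCnw{}^+$ term through the fully indecomposable case of Section~\ref{sec:NonCodingIndec}), your outline matches the published argument.
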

\subsection{Enumerative consequences and random sampling}\label{MainResults}

  Let now $M(t;x,y)$ and $W(t;x,y)$ be respectively the generating functions of 
$\cM$ and $\cW$ with respect to the length (var. $t$), number of $U$ and $X$
    (var. $x$) and number of $L$ and $Y$ (var. $y$). Then

    \begin{align*}
      W(t;x,y)=\frac{1}{(1-(1+x)(1+y)t)}
    \end{align*}
    and, upon dealing separately
    with the case $m\in\{1,n\}$ where the mark is on a letter $Y$ from the
    case $1<m<n$ where the mark is on a letter $L$, we have that
    \begin{align*}
      M(t;x,y)=2\cdot txy\cdot W(t;x,y)\cdot txy
      +(txy)\cdot W(t;x,y)\cdot t(1+x)y\cdot W(t;x,y)\cdot txy.
  \end{align*}
    In particular $M(t,1,1)$ gives an \emph{a priori unrelated}
    combinatorial interpretation of the dominant rational term in
    Formula~\eqref{eq:sq} and~\eqref{eq:cp} since
    \begin{align*}
      W(t;1,1)=\frac{1}{1-4t}\quad\textrm{ and }\quad M(t;1,1)=\frac{t^2}{1-4t}\left(2+\frac{2t}{1-4t}\right).
    \end{align*}

From Theorem~\ref{thm:main} it already appears that the two sets of words that do not encode objects belonging to $\cSq$ and to  $\cCp$ according to their \emph{horizontal/vertical encoding}, that we  call \emph{non-coding words} for  $\cSq$ and for  $\cCp$, have a similar
structure. In fact the analogy between the two results goes further
since the languages  $\cTSsw$ and $\cTSnw$, $\cDCsw$, $\cDCnw{}^+$ have
essentially the same univariate generating functions:
\begin{align}\label{eq:allalgebraic}
\TSsw(t)=\TSnw(t)=(2\DCsw(t)+1)t=2\DCnw{}^+(t)-t=\frac{t}{\sqrt{1-4t}}
\end{align}
and these results together with Theorem~\ref{thm:main} immediately
imply Formulas~\eqref{eq:sq} and~\eqref{eq:cp}.

The evaluations~\eqref{eq:allalgebraic} can be obtained from algebraic
decompositions of the respective classes of objects, but we choose
also to obtain them by re-using the same horizontal/vertical
encoding to relate all these four generating functions to the generating series
$N(t;x,y)$ of Narayana numbers, defined by the equation
  \begin{align*}
    N(t;x,y)=t\cdot(1+xN(t;x,y))(1+yN(t;x,y)).
  \end{align*}
  From this analysis we obtain the refinements of
  Formulas~\eqref{eq:sq} and~\eqref{eq:cp}:
\begin{corollary}
  Let $Sq(t,x,y)$ be the generating function of square
  permutations with respect to the size ($t$), the number of upper
  points ($x$) and the number of left points ($y$), and let $Cp(t;x,y)$ be the generating function of convex permutominoes with respect to
  the size ($t$), the number of upper sides ($x$) and the number of
  left sides ($y$).  Then
  \begin{align*}
    \Sq(t;x,y)=M(t;x,y)
    &-\;\tTSsw(t,x,y)\cdot t\cdot (y+1)\cdot W(t,x,y)\cdot txy\;\\
    &-\;\TSnw(t,x,y)\cdot t\cdot (x+y)\cdot W(t,x,y)\cdot txy\\
    \Cp(t;x,y)=M(t;x,y)
    &-\;\DCsw(t,x,y)\cdot t\cdot y\cdot W(t,x,y)\cdot txy\;\\
    &-\;\DCnw{}^+(t,x,y)\cdot W(t,x,y)\cdot txy
  \end{align*}
  where the auxiliary series are explicit rational functions of $x$, $y$ and the Narayana function evaluations
  $N(t;x,y)$ and $N(t;xy,1)$. In particular
  \begin{align*}
  \Tnw(t;x,y)&=\frac{xyN(t;x,y)}{(1-xyN(t;x,y))(1+(x+y-xy)N(t;x,y))}\\
  \tTSsw(t;x,y)&=\frac{xyN(t;xy,1)}{(1-yN(t;xy,1))(1+N(t;xy,1))}.
  \end{align*}
\end{corollary}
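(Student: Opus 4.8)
The plan is to prove the Corollary by tracking the refined statistics through the same structural decompositions that underlie Theorem~\ref{thm:main}, upgrading each univariate identity of~\eqref{eq:allalgebraic} to a trivariate one in $t,x,y$. First I would establish the multivariate version of the formula for $M(t;x,y)$: the displayed equation for $M$ already records $U$/$X$ with $x$ and $L/Y$ with $y$, so I only need to check that the weight $txy$ on each boundary letter $(X,Y)$, the weight on the marked letter (which is forced to carry $v_m\in\{L,Y\}$, hence a factor $y$ and a free choice $1+x$ on the $U/D$ coordinate, explaining the $t(1+x)y$), and the plain-letter weight $(1+x)(1+y)t$ inside $W$ are all consistent with the definitions of the horizontal and vertical profiles. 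This is the bookkeeping that makes $M(t;x,y)$ the refined rational term.

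Next I would attack the two subtracted languages in each line of Theorem~\ref{thm:main}. The key point is that each non-coding word is a concatenation of a ``triangular-type'' prefix (a word in $\cTSsw,\cTSnw,\cDCsw$ or $\cDCnw{}^+$), a short connector of one or two letters, a free middle word in $\cW$, and a final $(X,Y)$. Reading off how $x$ and $y$ distribute over these four blocks gives exactly the products displayed in the Corollary: for instance the connector $\{(D,L),(D,R)\}$ after a $\cTSsw$-prefix contributes $t(y+1)$ (the $U/D$ coordinate is pinned to $D$, the $L/R$ coordinate is free), while the connector $\{(U,R),(D,L)\}$ after a $\cTSnw$-prefix contributes $t(x+y)$ (the two allowed letters carry weights $tx$ and $ty$ respectively). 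So the main work is to compute the refined generating functions of the prefix languages themselves. For $\cTSnw$ I would set up the standard left-to-right record decomposition of a triangular permutation rotated by $90^\circ$: such a permutation is built from an upper path and a lower path sharing endpoints, which is precisely a pair of lattice paths counted by Narayana numbers with $x$ marking upper points and $y$ marking left points, giving the closed form $\Tnw(t;x,y)=\dfrac{xyN(t;x,y)}{(1-xyN(t;x,y))(1+(x+y-xy)N(t;x,y))}$ after resumming the ``interior'' contributions; the $1-xyN$ factor comes from the points lying on both paths and the $1+(x+y-xy)N$ factor from the remaining record structure. For $\tTSsw$ the $180^\circ$ rotation identifies the two coordinates up to the substitution $N(t;x,y)\mapsto N(t;xy,1)$, because in that orientation a point that is ``upper'' is automatically ``left'', so $x$ and $y$ always occur together; this yields $\tTSsw(t;x,y)=\dfrac{xyN(t;xy,1)}{(1-yN(t;xy,1))(1+N(t;xy,1))}$, and specializing $x=y=1$ recovers $t/\sqrt{1-4t}$ via $N(t;1,1)=C(t)-1$ and the classical identity $C(t)/(1-C(t))\cdot(\text{stuff})=1/\sqrt{1-4t}-1$. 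The remaining series $\DCsw(t;x,y)$ and $\DCnw{}^+(t;x,y)$ are handled the same way, being the colored (resp. indecomposable) analogues, and their univariate specializations must match $\TSsw$ up to the factors recorded in~\eqref{eq:allalgebraic}.

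The hard part will be getting the refined statistics to align across the rotations: the parameters ``number of upper points / upper sides'' and ``number of left points / left sides'' are defined relative to the original (un-rotated) objects, so when I decompose a rotated triangular or directed-convex object I must carefully re-identify which records become upper and which become left, and make sure the Narayana variables $x,y$ land on the right blocks — it is exactly here that the asymmetry between the $\cSq$ lines (two symmetric triangular pieces) and the $\cCp$ lines (a colored piece plus an indecomposable piece) shows up, and where a sign or a swapped variable would go unnoticed. I would therefore cross-check every closed form in two ways: by the specialization $x=y=1$ against~\eqref{eq:allalgebraic} and Formulas~\eqref{eq:sq}--\eqref{eq:cp}, and by extracting small coefficients (sizes up to $5$ or $6$) and comparing with direct enumeration of square permutations and convex permutominoes refined by upper/left records. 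Once the four prefix series are pinned down, substituting them together with $M(t;x,y)$ and $W(t;x,y)$ into the displayed formulas is a routine algebraic simplification, and the claimed rationality in $x$, $y$, $N(t;x,y)$ and $N(t;xy,1)$ follows immediately from the closed forms of $N$.
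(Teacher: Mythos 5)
The first half of your plan is sound and matches the paper: the refined weights of $M(t;x,y)$ and $W(t;x,y)$, the connector weights $t(1+y)$ and $t(x+y)$, and the translation of the bijections of Theorem~\ref{thm:main} into the two displayed equations are indeed pure bookkeeping. The gap is in the second half, which is the substantive content of the corollary: the closed forms for $\Tnw(t;x,y)$, $\tTSsw(t;x,y)$ and the permutomino series $\DCsw$, $\DCnw{}^+$ are asserted rather than derived. Your structural claim that a ($90^\circ$-rotated) triangular permutation ``is precisely a pair of lattice paths counted by Narayana numbers'' is not correct as stated: triangular permutations carry three boundary paths and are counted by central binomial coefficients, whereas a pair of paths with common endpoints describes a \emph{parallel} (321-avoiding) permutation, which is the Catalan/Narayana object. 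Attributing the factor $1-xyN$ to ``points lying on both paths'' and $1+(x+y-xy)N$ to ``the remaining record structure'' is exactly what has to be proved, and no argument is offered.

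More importantly, you correctly identify the real obstacle --- the statistics ``upper points'' ($x$) and ``left points'' ($y$) are not equivariant under the rotations relating $\cTnw$ (and the directed-permutomino classes) to a class with a usable left-to-right decomposition --- but your remedy is only numerical cross-checking at $x=y=1$ and in small sizes, which cannot establish a trivariate identity. The paper resolves precisely this point by an argument you would need to supply (or replace by a genuinely carried-out refined decomposition): it re-applies the same encoding to marked words with $m=n$ and $m=1$ to obtain equations for $\Tse_1$ and $\Tne_1$, then uses the factorizations $\cTnw\equiv\cCnw\times(1+\cPnw)$ and $\cTse_1\equiv\bullet+\cPse_1\times\cCse$, the structural fact that co-indecomposable permutations have no points that are simultaneously upper and left apart from their extremes, giving $\Cnw(t;x,y)=xy\,\tCse(txy;\tfrac1y,\tfrac1x)$, and the identity $N(txy;\tfrac1y,\tfrac1x)=xyN(t;x,y)$ to transport the refinement across the rotation. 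Likewise the permutomino case is not ``handled the same way'' for free: it needs its own equations for the colored classes $\cDCse{}^+$ and $\cDCne$ (again via the marks $m=n,1$), and you never engage with the colored fixed points nor with the tilde modification in $\tTSsw$ (the rightmost point contributing to $x$ only when maximal), both of which affect the exact coefficients appearing in the displayed formulas.
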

\begin{remark}
  Observe that, when specializing $x=y=1$, we obtain  $\tTSsw(t;1,1)=\Tsw(t;1,1)$.
  \end{remark}
In the proof of Theorem~\ref{thm:main} we provide a linear time decoding procedure that, given a word in
$\mathcal{M}$,  outputs the corresponding square permutation or convex
permutomino. Consequently, since Formulas~(\ref{eq:sq})--(\ref{eq:cp}) immediately imply that
$\Sq_n$ and $\Cp_n$ behave asymptotically as the 
number of words of size $n$ in $\cM$, we also have the following corollary:
\begin{corollary}
  There is a random sampling algorithm to generate uniform random
  square permutations with $n$ points or uniform random convex
  permutominoes of size $n$ in expected time linear in $n$.
\end{corollary}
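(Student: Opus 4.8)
The plan is to derive the random sampling corollary directly from Theorem~\ref{thm:main}, from the explicit rational generating function $M(t;1,1)$, and from the decoding procedure promised in the proof of that theorem. The key point is that Formulas~\eqref{eq:sq} and~\eqref{eq:cp} show that $\Sq_n$ and $\Cp_n$ are both of the form $[t^n]M(t;1,1)$ minus an algebraic correction term whose coefficients grow like $\Theta(4^n/\sqrt{n})$, whereas $[t^n]M(t;1,1)=\Theta(n\,4^n)$. Hence the ratio $\Sq_n/[t^n]M(t;1,1)$ (and likewise for $\Cp_n$) tends to $1$, so a rejection scheme built on top of a uniform sampler for size-$n$ words of $\cM$ will accept after $O(1)$ trials in expectation.

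Concretely, I would first describe a linear-time uniform sampler for the size-$n$ elements of $\cM$. Since $\cM$ decomposes (as recorded in the formula for $M(t;x,y)$) into a bounded number of cases, each of the shape ``two or three fixed boundary letters, one or two free factors from $\cW=\cA^{*}$, and one marked position,'' it suffices to (i) choose the case and the split of the remaining length among the $\cW$-factors according to the exact subcounts, which are simple products of binomial-type quantities readable off $W(t;1,1)=1/(1-4t)$, then (ii) fill each $\cW$-factor with a uniform word over the four-letter alphabet $\cA$, and (iii) place the mark uniformly among the admissible positions (those carrying a letter in $\{L,Y\}$). All of this is $O(n)$ time with $O(n)$-bit arithmetic, using precomputed tables of the relevant coefficients of $M(t;1,1)$.

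Next I would invoke the decoding procedure from the proof of Theorem~\ref{thm:main}: given $(w,m)\in\cM$, it runs in time linear in $n$ and either outputs the square permutation (resp.\ convex permutomino) that the horizontal/vertical encoding maps to $(w,m)$, or reports that $(w,m)$ is a non-coding word. By Theorem~\ref{thm:main} the encoding is a bijection onto $\cM$ minus the two explicitly described non-coding sublanguages, so this test is exact. The sampler then repeatedly draws a uniform size-$n$ word of $\cM$, runs the decoder, and returns the decoded object on the first success. By the bijection, the output is uniform over $\cSq_n$ (resp.\ $\cCp_n$), and the expected number of iterations is $[t^n]M(t;1,1)/\Sq_n$ (resp.\ $[t^n]M(t;1,1)/\Cp_n$), which by the asymptotic comparison above is $1+o(1)$, hence bounded. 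Multiplying the $O(1)$ expected number of iterations by the $O(n)$ cost per iteration gives the claimed linear expected time.

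The only genuinely delicate point is the asymptotic estimate justifying the bounded rejection rate: one must check that the subtracted term in each of Formulas~\eqref{eq:sq}--\eqref{eq:cp} is asymptotically negligible compared with $[t^n]M(t;1,1)$. This follows from standard singularity analysis --- the rational part $M(t;1,1)=\frac{t^2}{1-4t}\bigl(2+\frac{2t}{1-4t}\bigr)$ has a double pole at $t=1/4$ giving coefficients $\sim c\,n\,4^n$, while the algebraic parts $\frac{4t^3}{(1-4t)^{3/2}}$ and $\frac{t^2}{(1-4t)^{3/2}}$ have only a $(1-4t)^{-3/2}$ singularity, giving coefficients $\sim c'\,\sqrt{n}\,4^n = o(n\,4^n)$ --- so I would simply record this comparison and cite the transfer theorems rather than recompute the constants. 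Everything else (exact subcount bookkeeping for the $\cM$-sampler, linearity of the decoder) is routine given the earlier sections.
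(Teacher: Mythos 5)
Your proposal is correct and follows essentially the same route as the paper: rejection sampling from uniform size-$n$ elements of $\cM$, running the linear-time decoder, and noting that the non-coding words are asymptotically negligible because Formulas~\eqref{eq:sq}--\eqref{eq:cp} show $\Sq_n$ and $\Cp_n$ grow like the number of words in $\cM_n$ (coefficients of order $n\,4^n$ versus a subtracted term of order $\sqrt{n}\,4^n$). The extra detail you supply on uniformly sampling $\cM_n$ and on the coefficient asymptotics is a fleshed-out version of what the paper states more briefly, not a different argument.
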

This result implies in turn  that there is an algorithm to
produce in linear time generic random horizontally and vertically
convex configurations of points and self-avoiding polygons on a large
grid.

We would like to point out that the properties of large random square permutations are studied by Borga and Slivken in~\cite{JE}, where they proved that their shape is typically rectangular.

\section{A decoding algorithm for square permutations}\label{sec:NonCodingSquares}

We first give a detailed description of the decoding algorithm for square permutations (without colors), in the other cases the decoding algorithm will be similar.

We already pointed out that the set of triples coding square
permutations of size $n$ forms a subset of $\cM$. However, this subset
is not easy to describe directly. We provide an algorithmic
description via the following decoding algorithm.

Let us take an element of $\cM$ of length $n$, viewed as a triple
$(u,v,\ell)$. We are going to construct a partial permutation
$\sigma=\sigma(1)\ldots\sigma(k)$, with $k\le n$. The idea is to insert
each point on the grid from left to right as long as a compatible
decoding is possible.  The rules to construct $\sigma$ depend on the
letters of $u$ and $v$: the letter $L$ (resp.\ $R$, $U$, $D$) means
that the point of the permutation we are constructing belongs to a
left path (resp.\ right, lower, upper path) of the corresponding
permutation, while the letters $X$ or $Y$ means that this
point is extremal. Then (see Figure~\ref{ParRec})\,:
\begin{itemize}
\item Write $u$ (resp.\ $v$) along the upper edge (resp.\ left edge) of the  grid $n \times n$ starting from the left (resp.\ starting from the bottom).
\item Start reading the word $u$: $u_1$ is necessarily an $X$ and it
  should correspond to the leftmost point of the permutation. Accordingly
  insert a point $\sigma(1)$ in the leftmost column and in row $\ell$
  (the row of the marked letter of $v$).
\item Continue reading each letter $u_i$ of  $u$, for $i \ge 2$:
  \end{itemize}
\begin{enumerate}
\item If $i=n$ then insert the point $\sigma(n)$ in the unique remaining empty row  and end the algorithm.
\item If $u_i= U$ and we  have not yet inserted the highest point of the permutation, then take the first row $j$  above $\sigma(i-1)$   that is labeled with  $L$ or  $Y$ and insert the point  $\sigma(i)=j$.    
\item 
  If $u_i= U$ and we have already inserted the highest point of the
  permutation, there are two cases:
  \begin{itemize}
  \item if $\sigma(1)\ldots\sigma(i-1)$ is contained in the grid $(i-1)\times (i-1)$ attached to the upper-left
    corner of the grid $n\times n$ then the insertion is valid only if the row $n-i$ is labeled  with $L$ or $Y$, and then $\sigma(i)=n-i$;
    otherwise the insertion is  invalid and the algorithm stops (see the first case in
    Figure~\ref{ParRec}, where the invalid insertion is represented with
    a white bullet);
  \item otherwise  take the first row $j$ under $\sigma(i-1)$ that is labeled with $R$ or $Y$ and insert the point $\sigma(i)=j$.
  \end{itemize}
\item If $u_i= D$ and we have not yet inserted the lowest point of the permutation, then take the first row $j$ that is labeled with  $L$ or  $Y$ under $\sigma(i-1)$ and insert the  point $\sigma(i)=j$:
 if $\sigma(1)\ldots\sigma(i-1)$ is contained in the grid $(i-1)\times (i-1)$ attached to the upper-left corner of the grid $n\times n$ and $j=i$ then the insertion is invalid and the algorithm stops (see the second case in  Figure~\ref{ParRec}).
\item If $u_i= D$ and we have already inserted the lowest point of
  the permutation, then take the first row $j$ that is labeled with
  $R$ or $Y$ above $\sigma(i-1)$ and insert the point
  $\sigma(i)=j$: 
 if $\sigma(1)\ldots\sigma(i-1)$ is contained in
  the grid $(i-1)\times (i-1)$ attached to the bottom-left corner of
  the grid $n\times n$ then the insertion is invalid and the algorithm
  stops (see the third case in Figure~\ref{ParRec}).
\end{enumerate}

\begin{figure}
   \begin{center}
     \includegraphics[scale=.9]{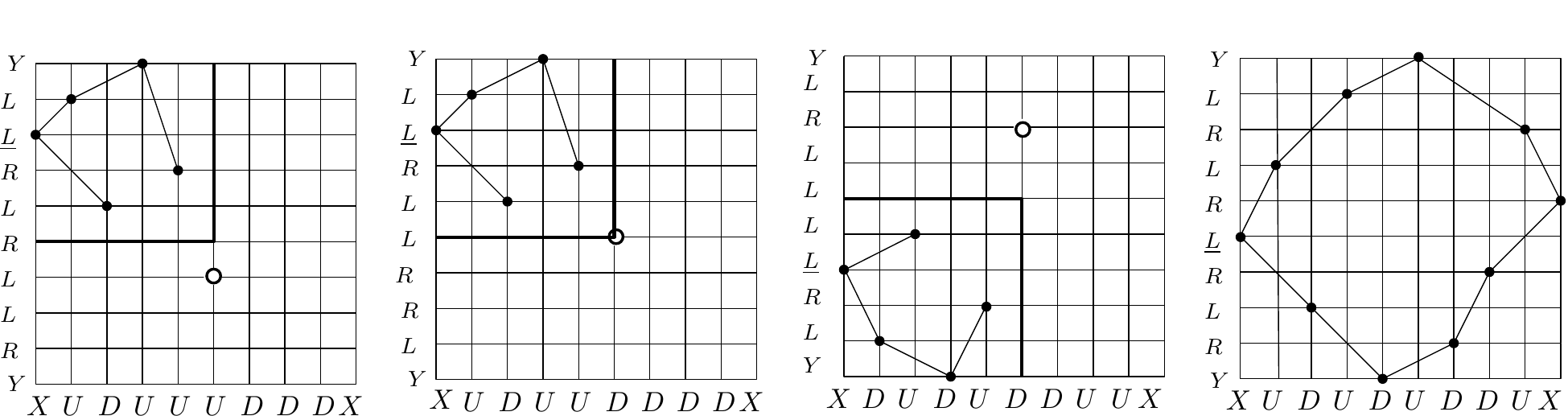}
   \end{center}
   \vspace{-1em}
  \caption{Four examples of triples $(u,v,\ell)$, only the fourth represents an encoding of a square permutation. }
  \label{ParRec}
  \vspace{-1em}
\end{figure} 
Let us suppose that the algorithm ends after reading $i$ letters of
$u$ and $v$, $i \in \{1, \ldots, n\}$.  
Then we can state the
following proposition\,:
\begin{proposition}\label{words}
 Given an element $(w,\ell)$ of $\cM$ of size $n$ viewed as triple
 $(u, v,\ell)$ the algorithm stops and yields:
\begin{itemize}
\item (in case of success) a permutation $\sigma(1) \ldots
\sigma(n) \in\mathcal{S}q_n$ whose code is $(w,\ell)$, or
\item (in Cases 3 and 4 above) a triple $(\rho,(u_i,v_{n-i}),(u'',v''))$,
  where $\rho=\mathrm{Standard}(\sigma(1) \ldots \sigma(i-1))\in
  \Tnw_{i-1}$, with $2\leq i\leq n$, is the standardization of
  $\sigma$ on $\{1,\ldots,i-1\}$, i.e., $\rho_l=\sigma(l)-n+i-1$ with
  $l\in \{1,\ldots,i-1\}$, $(u_i,v_{n-i})\in\{(U,R),(D,L)\}$, and
  $(u'',v'')$ is a pair of words of length $n-i-1$ where
  $u''=u_{i+1}\ldots u_{n-1}X$ and $v''=Yv_{2} \ldots v_{n-i-1}$, or
\item (in Case 5 above) a triple $(\rho,(u_i,v_i),(u'',v''))$, where
  $\rho=\sigma(1) \ldots \sigma(i-1) \in \Tsw_{i-1} $ with $2\leq
  i\leq n$, $(u_i,v_i)\in \{(D,L),(D,R)\}$, and $(u'',v'')$ is a pair
  of words of length $n-i-1$, where $u''=u_{i+1}\ldots u_{n-1}X$ and
  $v''=v_{i+1} \ldots v_{n-1}Y$.
\end{itemize}
\end{proposition}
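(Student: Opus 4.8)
The plan is to analyze the decoding algorithm step by step and prove by induction on the number $i$ of letters read that the constructed partial permutation has the claimed structural properties, and that the three stopping cases (success, Case~3/4 failure, Case~5 failure) are exhaustive and produce exactly the described outputs. First I would establish a loop invariant: after inserting $\sigma(1),\ldots,\sigma(k)$, the partial permutation is a \emph{valid partial square permutation} in the sense that every point inserted so far lies on one of the four boundary paths as dictated by its letter, the rows used are exactly the rows carrying the letters already consumed (together with the marked row), and the set of available rows is an interval-complement determined by whether the highest/lowest points have been placed. The key observation making the invariant maintainable is that rules 2--5 always insert into the \emph{first} available row in a prescribed direction, so the points on each path are forced; this is what will eventually give injectivity of the code in the success case (``whose code is $(w,\ell)$'').

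Next I would treat termination and the trichotomy of outcomes. The algorithm reads at most $n$ letters, so it always stops. If it never triggers an invalid insertion, it reaches $i=n$, rule~1 fills the last empty row, and I must check the output is genuinely a square permutation with the given code — this amounts to verifying that the $U/D$ letters really do mark upper points and the $L/R$ letters really do mark left points of the resulting point set, which follows from the invariant since each point was placed as the extreme available point in its direction. If an invalid insertion does occur, it happens in exactly one of the bracketed sub-cases of rules 3, 4, 5; I would check that rule~3's invalid sub-case and rule~4's invalid sub-case are exactly the situations where the already-placed prefix $\sigma(1)\ldots\sigma(i-1)$ sits inside the $(i-1)\times(i-1)$ corner grid (upper-left for rules 3 and 4, since the blocked point wants to go ``around'' the top; bottom-left for rule~5), and that in these situations the blocked letter pair is forced to be $(U,R)$ or $(D,L)$ (rules 3,4) respectively $(D,L)$ or $(D,R)$ (rule 5). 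This uses the precise geometry of which rows carry which letters, i.e.\ the constraint $v_m\in\{L,Y\}$ from the definition of $\cM$ together with the positions read so far.

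Then I would identify the standardized prefix with a variant of a triangular permutation. In the Case~3/4 stop, the prefix occupies the top-left $(i-1)\times(i-1)$ corner and, by the invariant, each of its points is a left-upper, right-upper, or right-lower record \emph{within that corner} — after the standardization $\rho_\ell=\sigma(\ell)-n+i-1$ this says $\rho\in\Tnw_{i-1}$, the $90^\circ$-rotated triangular class, exactly as stated; similarly the Case~5 stop leaves the prefix in the bottom-left corner with points on the left-upper or right-lower/right-upper paths, giving $\rho\in\Tsw_{i-1}$. Finally the residual words $(u'',v'')$ are literally the unread suffixes of $u$ and $v$ with the appropriate terminal $X$/$Y$ reattached, which is bookkeeping: the blocked letter $u_i$ (and its $v$-partner, which is $v_i$ in Case~5 and $v_{n-i}$ in Cases~3--4 because the blocked row is $n-i$ there) is separated out, and lengths match by counting. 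The main obstacle I anticipate is the careful case analysis distinguishing ``highest/lowest point already inserted'' from ``not yet'', and in particular proving that the two invalid sub-cases of rules 3 and 4 really do coincide with the ``prefix in the corner'' condition and with the forced letter pairs — this is where a wrong corner or a stray off-by-one would break the claimed $\Tnw$/$\Tsw$ membership, so I would prove it by maintaining, as part of the invariant, the exact rectangle currently occupied by the placed points and the exact interval of still-free rows.
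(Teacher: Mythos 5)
Your overall route coincides with the paper's: analyze the algorithm directly, argue that each insertion is forced (your loop invariant plays the role of the paper's explicit check that the row $j$ of Cases 2--5 exists whenever no invalidity is declared), then split according to the two ways the algorithm halts, identifying the halted prefix with a variant of a triangular permutation and the unread letters with the residual words. The bookkeeping parts of the plan are fine.

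However, the structural identification at the heart of the second and third bullets is wrong as you state it. For a Case 3/4 halt you claim that every point of the prefix is a left-upper, right-upper or right-lower record within the corner, and you call this $\Tnw_{i-1}$; besides the fact that this list of paths describes the unrotated triangular class rather than its $90^\circ$ rotation, the claim itself is false. At a Case 3/4 halt the prefix occupies \emph{all} of the top $i-1$ rows, so the bottom row (labeled $Y$) is untouched, the minimum has not been placed, and therefore no Case 5 insertion has ever occurred: the path guaranteed to be absent is the \emph{right-lower} one, while bottom-left-record points coming from earlier Case 4 insertions are perfectly possible. Concretely, with $n=9$, $\ell=7$, $u=X\,D\,U\,D\,U\,U\ldots$ and $v$ chosen with $v_5=v_6=v_7=v_8=L$ and $v_3=R$, the algorithm halts at $i=6$ with prefix $7\,6\,8\,5\,9$, whose standardization $3\,2\,4\,1\,5$ contains the point of value $2$, which is a bottom-left record only --- neither an upper record nor a right-lower record --- so your claimed invariant consequence fails. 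The correct statement (and the paper's one-line argument) is that every prefix point is a left-upper, right-upper or bottom-left record, i.e.\ the right-lower path has not started, which is what places $\rho$ in the rotated class $\Tnw_{i-1}$. Symmetrically, at a Case 5 halt the prefix fills the bottom $i-1$ rows, the maximum has not been placed, and it is the \emph{right-upper} path that is missing (the points lie on the left-upper, bottom-left and right-lower paths), not the bottom-left one as your ``left-upper or right-lower/right-upper'' description suggests. Since this is precisely the step that pins down which rotated triangular class occurs in each failure case, the plan as written would prove the wrong memberships; the fix is to record, as part of your invariant, which extremal row has not yet been touched and hence which of the four boundary paths cannot yet have been started.
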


\begin{proof}[Proof of Proposition~\ref{words}]
By construction the permutation obtained by applying the algorithm is
uniquely determined and it has no interior points. Indeed, the first
point $\sigma_1$ is determined by the marked letter and Case 2
(resp.\ 3,4,5) of the previous algorithm uniquely describes the
insertion of a point in the left-upper path (resp.\ right-upper path,
left-lower path, right-lower path) of $\sigma$. We just need to prove
that row $j$ in Case 2 (resp.\ 3, 4, 5) does exist. Suppose
that it does not, then for Case 2 (resp.\ 4) it means that the
point at the row labeled by $Y$, i.e., the higher (resp.\ lower) point
of the permutation it has already been inserted, thus contradicting
the hypothesis that it has not. For Case 3 (resp.\ 5) we have two
possibilities\,:
\begin{itemize}
 \item the lower (resp.\ higher ) point of the permutation has already
   been inserted, therefore the leftmost left-upper and left-lower
   paths have been constructed, i.e., there is no $L$ row free, thus
   contradicting the hypothesis that there is no $R$ or $Y$ row free;
\item the lower (resp.\ higher ) point of the permutation has not been
  inserted, therefore the lower (resp.\ higher) row, labeled by $Y$,
  is still free, thus contradicting the hypothesis.
\end{itemize} 
Now we have to distinguish the two ways in which the algorithm can stop\,:
\begin{enumerate}
\item \label{one}$i=n$.  In this case we obtain by construction a
  permutation in $\mathcal{S}q_n$ and $(u,v,\ell)$ is the code of
  $\sigma$. 
\item $i<n$. In this case we obtain from the previous algorithm a
  partial permutation $\sigma$ of size $i-1$ and a pair of words
  $(u'',v'')$ of length $n-i-2$. If the algorithm stops in Case~3 or~4
  then the partial permutation $\sigma$ of the grid $n\times n$ is
  contained in the grid $(i-1) \times (i-1)$ on the upper-left corner.
  Since $\sigma$ is contained in the $i-1$ upper row, the lowest row has
  not been touched and the construction of a right-lower path has not
  started: $\rho=\mathrm{Standard}(\sigma)$ is a rotated
  triangular permutation $\in \Tnw_{i-1}$. If the algorithm
  stops in Case~5, $\rho=\sigma_1 \ldots \sigma_{i-1}$ is
  directly a permutation of $\{1,\ldots,i-1\}$ and similarly as above
  $\rho \in \Tse_{i-1}$ since the construction of a right-upper path
  has not started.
\end{enumerate}
\end{proof}
This yields a bijection corresponding to the first statement in Theorem~\ref{thm:main}:
  \begin{align*}
    \cM
    &\equiv 
    \cSq
    \;
    +\;\cTSsw\cdot\{(D,L),(D,R)\}\cdot\cW\cdot\{(X,Y)\}\;
    +\;\cTSnw\cdot\{(U,R),(D,L)\}\cdot\cW\cdot\{(X,Y)\}
  \end{align*}
  In turn this bijection implies an equation for refined generating series:
  \begin{align*}
    M(t;x,y)
    =
    \Sq(t;x,y)
    \;&
    +\;\tTSsw(t;x,y)\cdot t\cdot (1+y)\cdot W(t;x,y)\cdot txy\;\\
    &+\;\TSnw(t;x,y)\cdot t\cdot (x+y)\cdot W(t;x,y)\cdot txy
  \end{align*}
  where $\tTSsw(t;x,y)=(txy)^{-1}\TSsw_1(t;x,y)-1$ is a modified
  version of the bivariate generating series of south west triangular
  permutations in which the rightmost point contributes to the
  parameter $x$ only if it is maximal, and $\TSsw_1(t;x,y)$ denotes
  the generating series of south west triangular permutations ending
  with a maximal point. More generally we use the subscript 1 to
  indicate that a subset of triangular or parallel permutations having
  a unique (extremal) point on the trivial face.
\begin{corollary}
\begin{equation}\label{triangWordsbis}
\{(w,n)\in \cM\}\;\equiv\; (\bullet\cTse) + (\bullet\cPse) \times \{(U,R),(D,L)\}\times \mathcal{W}\times\{(X,Y)\},
\end{equation}
\end{corollary}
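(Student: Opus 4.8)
The plan is to specialize the bijection of Proposition~\ref{words} to the case where the mark sits at position $m=n$ (equivalently $m=1$, up to symmetry), so that the marked letter of $v$ is the final $Y$ rather than an internal $L$. In that situation the first point $\sigma(1)$ must be inserted in the leftmost column and in the top row (the row of the last letter $Y$), i.e.\ $\sigma(1)=n$. This single constraint changes which cases of the decoding algorithm can ever be triggered: since $\sigma(1)$ is already the highest point, Cases~2 and~3 (insertions on a left-upper or right-upper path before/after the maximum) collapse, and in fact no point is ever inserted on a left-upper path. Consequently the only ways the algorithm can stop early are Cases~4 and~5, and a successful run produces a square permutation all of whose points lie on the left-lower, right-lower or right-upper path with $\sigma(1)$ the unique maximal point --- that is, a permutation of $\bullet\cTse$ (a triangular permutation rotated appropriately, with the top point marked). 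I would first carry out this case analysis carefully, tracking exactly which of the five algorithmic cases survives the constraint $m=n$.

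Next I would translate the two early-stopping outcomes of Proposition~\ref{words} into the language of the right-hand side of~\eqref{triangWordsbis}. When $m=n$, the Case~3/Case~4 outcome (stopping because the partial permutation is squeezed into the upper-left $(i-1)\times(i-1)$ subgrid) and the Case~5 outcome (squeezed into the bottom-left subgrid) must both be re-examined: with $\sigma(1)=n$ already placed, the ``upper-left squeezing'' scenario cannot arise in the same way, and what remains is a single family of bad prefixes, namely $\rho\in\bullet\cPse$ (a parallel permutation with its unique extremal point on the trivial face, suitably rotated), followed by a letter in $\{(U,R),(D,L)\}$, then an arbitrary biword of $\cW$, then the closing $(X,Y)$. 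The key step here is to check that the standardization/rotation bookkeeping from Proposition~\ref{words} specializes correctly: the triangular prefix $\Tnw_{i-1}$ (resp.\ $\Tse_{i-1}$) degenerates to a parallel permutation precisely because, with the global maximum already consumed by $\sigma(1)$, no genuine third path can be built in the prefix.

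Assembling these two observations gives exactly the claimed decomposition
\[
\{(w,n)\in\cM\}\;\equiv\;(\bullet\cTse)\;+\;(\bullet\cPse)\times\{(U,R),(D,L)\}\times\cW\times\{(X,Y)\},
\]
as a restriction of the master bijection
$\cM\equiv\cSq+\cTSsw\cdot\{(D,L),(D,R)\}\cdot\cW\cdot\{(X,Y)\}+\cTSnw\cdot\{(U,R),(D,L)\}\cdot\cW\cdot\{(X,Y)\}$
to the sub-class of marked words with mark at the last position. The main obstacle I anticipate is purely notational/orientational: keeping straight the four rotations of triangular and parallel permutations ($\cTse$ vs $\cTSnw$, $\cPse$ vs $\cPsw$, etc.) and verifying that forcing $\sigma(1)=n$ produces the $\cTse$/$\cPse$ orientations rather than another one --- this requires a small but error-prone picture-chasing argument, and it is where I would be most careful to match conventions with Figure~\ref{ParRec} and the statement of Proposition~\ref{words}. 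The underlying combinatorics, by contrast, is an immediate corollary once the case $m=n$ is isolated.
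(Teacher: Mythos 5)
Your overall strategy is the paper's: restrict the horizontal/vertical decoding (Theorem~\ref{thm:main} / Proposition~\ref{words}) to marked words with $m=n$, note that a successful run forces $\sigma(1)=n$ so the output is an element of $\bullet\cTse$, and show that the non-coding words specialize to parallel prefixes. However, your case analysis is inverted, and this is a genuine error rather than an orientation slip. With $\sigma(1)=n$ the highest point is placed at step~1, so Case~2 indeed never fires --- but every letter $U$ is then handled by Case~3, whose stopping condition (the prefix squeezed into the upper-left $(i-1)\times(i-1)$ subgrid with the required row not labeled $L$ or $Y$) remains perfectly possible; in fact the initial point sits in that upper-left subgrid, so this scenario is the one that survives. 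Conversely, Case~5 can never cause a stop: its condition requires the prefix to be contained in the \emph{bottom-left} $(i-1)\times(i-1)$ subgrid, which is impossible once row $n$ is occupied from the start (and $i-1<n$). So the constraint $m=n$ eliminates the $\cTSsw\cdot\{(D,L),(D,R)\}$ branch of Theorem~\ref{thm:main}, not the upper-left one, and the remaining non-coding words come from Cases~3 and~4, whose attached letters are exactly $\{(U,R),(D,L)\}$ and whose triangular prefixes, being forced to begin with their maximal value, degenerate to $\bullet\cPse$.

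Your proposal asserts the opposite (``the only ways the algorithm can stop early are Cases~4 and~5'', and ``the upper-left squeezing scenario cannot arise''), which is inconsistent with the identity you are proving: a Case~5 stop contributes letters in $\{(D,L),(D,R)\}$ and a $180^{\circ}$-rotated prefix, so the route you describe would produce a right-hand side of the form $(\bullet\cPsw)\times\{(D,L),(D,R)\}\times\cW\times\{(X,Y)\}$-type rather than $(\bullet\cPse)\times\{(U,R),(D,L)\}\times\cW\times\{(X,Y)\}$. The fix is small but essential: argue that Case~5 stops are impossible when $m=n$, that Case~3/4 stops persist, and that a $\Tnw$-type prefix starting with its maximum has a trivial third path, hence equals a marked top point followed by a parallel permutation --- which is precisely the paper's observation that permutations of $\cSq$ (resp.\ of the relevant triangular class) starting with the maximal value correspond to $\bullet\cTse$ (resp.\ $\bullet\cPse$).
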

\begin{proof}
  Indeed, observe that the permutations of $\Tse$ of size $n-1$ are exactly
the permutations of $\Sq$ that can be obtained from permutations of
$\cSq$ of size $n$ that start with the maximal value $n$. Moreover, the permutations of $\cPse$ of size $k-1$ are exactly
the permutations  that can be obtained from permutations of
$\Tne$ of size $k$ that start with the maximal value. Then by applying  Theorem~\ref{thm:main} we obtain the result. 
\end{proof}
Consequently,
\begin{align}
txy\cdot W(t;x,y)\cdot txy=(\Tse_1(t;x,y)-txy)+\Pse_1(t;x,y)\cdot t(x+y)\cdot W(t;x,y)\cdot txy.
\end{align}
These relations are enough to compute the univariate generating
functions (with $x=y=1$): indeed by symmetry of the various classes of
triangular permutations and by knowing that $\cPse$ are
counted by Catalan numbers we directly obtain a proof of 
Formula~\ref{eq:sq}.

In order to obtain the refined generating series a bit more
information is necessary because the four symmetry classes of
triangular permutations have different refined generating series: the
symmetry along the co-diagonal exchanges the parameter counted by $x$
and $y$, so that $\Tse_1(t;x,y)=\Tse_1(t;y,x)$ and
$\Tnw_1(t;x,y)=\Tnw_1(t;y,x)$ while $\Tne_1(t;x,y)=\Tsw_1(t;y,x)$, but
symmetries do not imply such direct relations between these three
classes of series.

Forcing the initial point to be in row one yields a second equation that
allows to compute the refined generating series $\Tne_1(t;x,y)$ (and $\Tsw_1(t;x,y)$):
\begin{corollary}
\begin{equation}\label{triangWordster}
\{(w,1)\in \cM\}\;\equiv\; (\bullet\cTne) + (\bullet {\cPne}) \times \{(D,L),(D,R)\}\times \mathcal{W}\times\{(X,Y)\},
\end{equation}
\end{corollary}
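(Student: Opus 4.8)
The statement to prove is the analogue of the earlier Corollary (equation~\eqref{triangWordsbis}) but with the mark on the \emph{first} row instead of the last row. The plan is to mirror exactly the argument used for \eqref{triangWordsbis}, transporting it through the appropriate symmetry. First I would observe that a marked word $(w,1)\in\cM$ has its mark on the letter $Y$ at position~$1$, which under the decoding algorithm of Section~\ref{sec:NonCodingSquares} forces the first inserted point $\sigma(1)$ to lie in row~$1$, i.e.\ $\sigma(1)=1$, the minimal value. Then I would run the dichotomy of Proposition~\ref{words}: either the decoding succeeds and produces a genuine square permutation $\sigma\in\cSq_n$ with $\sigma(1)=1$, or it fails in one of Cases~3,4,5 and produces a triple consisting of a (rotated) triangular permutation together with a separating letter in $\{(U,R),(D,L)\}$ or $\{(D,L),(D,R)\}$ and a trailing biword in $\cW\cdot\{(X,Y)\}$.

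The key combinatorial identification is the same kind of bijective remark made in the proof of the Corollary above \eqref{triangWordsbis}: removing the forced minimal first point from a square permutation $\sigma$ of size $n$ with $\sigma(1)=1$ and standardizing gives exactly a triangular permutation in $\cTne$ of size $n-1$ (the lower-left corner being ``pinned'' means no right-lower or left-lower path can start, leaving only the north-east triangular shape), so $\{(w,1)\in\cSq\}\equiv\bullet\cTne$. Likewise, the decoding failures with a pinned minimal point are governed by a sub-permutation that is a parallel permutation of $\cPne$ (the two-path configuration compatible with a lower-left corner pin), attached to the dichotomy letter and a free tail — this is precisely the structure $(\bullet\cPne)\times\{(D,L),(D,R)\}\times\cW\times\{(X,Y)\}$. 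Combining the success case and the failure case and appealing to Theorem~\ref{thm:main} (restricted to marked words with $m=1$) yields~\eqref{triangWordster}.

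Concretely the steps are: (i) specialize the decoding algorithm and Proposition~\ref{words} to $m=1$ and check that the only change is that $\sigma(1)$ is the minimal point rather than an arbitrary one, so the ``trivial face'' in the sense of the subscript-$1$ convention is now the single lowest point; (ii) verify that under this pin the successful decodings are in bijection with $\cTne$ of size $n-1$ (via deletion of $\sigma(1)$ and standardization, with inverse given by prepending a minimal point), and the failed decodings are in bijection with $(\bullet\cPne)\times\{(D,L),(D,R)\}\times\cW\times\{(X,Y)\}$; (iii) conclude the displayed equivalence~\eqref{triangWordster}. The main obstacle is step~(ii): one must argue carefully that pinning the first point to row~$1$ is exactly the condition that forbids the lower-left corner's paths, so that triangular becomes $\cTne$ and parallel becomes $\cPne$ — this is a symmetry/orientation bookkeeping argument, dual to the one in the proof of~\eqref{triangWordsbis} where the last point was pinned to the top row and one obtained $\cTse$ and $\cPse$; the rest is a direct transcription of the already-established argument.
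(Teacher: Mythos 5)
Your proposal follows essentially the same route as the paper: the paper states this corollary as the mark-$1$ mirror of the corollary for $\{(w,n)\in\cM\}$, relying on exactly the observations you make — that marked words with $m=1$ force $\sigma(1)=1$, that square permutations starting with the minimal value correspond to $\bullet\cTne$ after deleting the pinned point, that the Cases 3--4 failure branch is impossible (the prefix cannot sit in the upper-left subgrid when row $1$ is occupied), and that the surviving Case 5 failures reduce to $(\bullet\cPne)\times\{(D,L),(D,R)\}\times\cW\times\{(X,Y)\}$. One small wording slip: pinning $\sigma(1)=1$ only trivializes the \emph{left-lower} path (so the remaining points lie on the left-upper, right-upper and right-lower paths, i.e.\ in $\cTne$); it does not prevent a right-lower path from occurring, contrary to your parenthetical remark, though your stated conclusion is the correct one.
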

Consequently,
\begin{align}
txy\cdot W(t;x,y)\cdot txy=(\Tne_1(t;x,y)-txy)+\tPne_1(t;x,y)\cdot t(1+y)\cdot W(t;x,y)\cdot txy,
\end{align}
where $\tPne_1(t;x,y)=\tPse_1(t;xy,1)$.

However, the approach has to be modified in order to deal with the last
series $\Tnw(x,y)$: by symmetry with respect to the main diagonal this
amounts to counting elements of $\cTse$ with respect to the number of
bottom points and right points; however note that it is not possible
to simply change the encoding into $D$ for lower points and $R$ for
right points while keeping the left to right decoding procedure
because the later becomes ambiguous.  Instead one can rely on the
fact that $\cTnw$ can be described in terms of $\cPnw$ and the set
$\cCnw$ of co-indecomposable elements of $\cTnw$, and similarly for
$\cTse_1$ and $\cCse$: more precisely we have
\begin{equation}
  \cTnw\equiv \cCnw \times (1+\cPnw), \qquad \cTse_1\equiv\bullet+\cPse_1\times\cCse.
\end{equation}
These decompositions yield the refined identities:
\begin{align}\label{eq:indec}
  \Tnw(t;x,y)&=\Cnw(t;x,y)(1+\tPnw(t;x,y))=\Cnw(t;x,y)\Pse_1(t;x,y)\\
  \Tse_1(t;x,y)&=txy+\Pse_1(t;x,y)\tCse(t;x,y)
\end{align}

Then due to the fact that co-indecomposable permutations have no
points that are simultaneously upper and left points, apart from their
uppermost and leftmost points (which are distinct), we have
$\Cnw(t;x,y)=\tCse(txy;\frac1y,\frac1x)xy$.

As a conclusion,
\[
\Tnw(t;x,y)=\Pse_1(t;x,y)\frac{\Tse_1(txy,\frac1y,\frac1x)xy-txy}{\Pse_1(txy,\frac1y,\frac1x)}
\]

In order to finish the computation we need to identify the various refinements of the Catalan generating series. Let $N(t;x,y)$ be the Narayana generating series, the 
unique formal power series solution of the equation
\[
N(t;x,y)=t(1+xN(t;x,y))(1+yN(t;x,y)).
\]
Then
\[
\Pnw_1(t;x,y)=\frac{xyN(t;x,y)}{1+(x+y-xy)N(t;x,y)}.
\]
Observe also that $N(txy,\frac1y,\frac1x)=xyN(t;x,y)$ so that
$\tPne_1(t;x,y)=\Pnw_1(t;xy,1)=txy(1+xyN(t;xy,1))$, so that
all expressions are rational in terms of $x$, $y$, $N(t;x,y)$ and
$N(t;xy,1)$.

In particular we obtain that
\begin{align}\label{for:Tnwxy}
  \Tnw(t;x,y)&=\frac{xyN(t;x,y)}{(1-xyN(t;x,y))(1+(x+y+xy)N(t;x,y))}\\\label{for:Tswxy}
  \tTSsw(t;x,y)&=\frac{xyN(t;xy,1)}{(1-yN(t;xy,1))(1+N(t;xy,1))}
\end{align}
which together with 
\begin{align*}
    \Sq(t;x,y)
    =    M(t;x,y)
    \;&-\;\tTSsw(t;x,y)\cdot t\cdot (1+y)\cdot W(t;x,y)\cdot txy\;\\
      &-\; \TSnw(t;x,y)\cdot t\cdot (x+y)\cdot W(t;x,y)\cdot txy
  \end{align*}
gives the result.

\bigskip
\begin{corollary}
Uniform random square permutations can be generated in
expected linear time.
\end{corollary}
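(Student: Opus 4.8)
The plan is to use \emph{rejection sampling} driven by the decoding algorithm of Section~\ref{sec:NonCodingSquares}. One trial consists of: (i)~drawing a word $(w,m)$ uniformly at random among the words of $\cM$ of size~$n$; (ii)~running the decoding algorithm on $(w,m)$; and (iii)~\emph{accepting} and returning the resulting permutation $\sigma$ if the algorithm reads all $n$ letters and outputs a $\sigma\in\mathcal{S}q_n$ whose code is $(w,m)$, and \emph{rejecting} if it instead aborts on an invalid insertion (Case~3, 4 or~5 of the algorithm; cf.\ Proposition~\ref{words}). The generator repeats independent trials until one is accepted. Correctness follows directly from Theorem~\ref{thm:main}: decoding, restricted to its accepting inputs, is a bijection onto the square permutations of size~$n$, each such permutation being decoded from exactly one word of $\cM$, namely its code. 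Since within a trial the word is uniform and decoding is deterministic, the output conditioned on acceptance is uniform on $\mathcal{S}q_n$.

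The expected number of trials is $O(1)$. The per-trial acceptance probability is $p_n=\Sq_n/M_n$, where $M_n:=[t^n]M(t;1,1)$. Expanding $M(t;1,1)=\tfrac{t^2}{1-4t}\bigl(2+\tfrac{2t}{1-4t}\bigr)$ shows that $M_n=(n+2)2^{2n-5}$ for $n\ge2$, which by Formula~\eqref{eq:sq} is precisely the dominant term of $\Sq_n$; the defect $M_n-\Sq_n=4(2n-5)\binom{2n-6}{n-3}=\Theta(\sqrt{n}\,4^n)$ is of strictly smaller order than $M_n=\Theta(n\,4^n)$. Hence $p_n\to1$, and since moreover $p_n>0$ for all $n\ge2$, we have $\inf_{n\ge2}p_n>0$; the number of trials being geometric with parameter $p_n$, its expectation $1/p_n$ is $O(1)$.

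Each trial costs $O(n)$. For step~(i): the product expression $M(t;x,y)=2\cdot txy\cdot W(t;x,y)\cdot txy+txy\cdot W(t;x,y)\cdot t(1+x)y\cdot W(t;x,y)\cdot txy$ unfolds into a one-shot recipe. One first flips a biased coin — heads with probability $2\cdot4^{n-2}/M_n$, computable in $O(1)$ from the closed form — to decide whether the mark sits on an endpoint letter ($Y$) or on an interior letter ($L$); in the endpoint case one draws the $n-2$ interior biletters independently and uniformly from $\cA$ and the mark uniformly from $\{1,n\}$, and in the interior case one draws the marked position uniformly among the $n-2$ interior slots, fixes its second coordinate to $L$ and its first uniformly in $\{U,D\}$, and draws the remaining $n-3$ biletters uniformly from $\cA$; this is $O(n)$ elementary operations. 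For step~(ii): the decoding algorithm reads $w$ once from left to right and performs a single point insertion per letter, the only non-constant-time primitive being ``find the first still-available row above (resp.\ below) $\sigma(i-1)$ carrying a prescribed label''. I would support this with doubly linked lists of the currently available rows together with one cursor per path, using the fact that each of the four paths of $\sigma$ is filled monotonically — the left-upper path in increasing row order, the left-lower path in decreasing row order, and symmetrically on the right side — so that the cursors advance without ever backtracking and the total primitive cost over a trial is $O(n)$. Multiplying the $O(1)$ expected trial count by the $O(n)$ cost per trial gives the claimed expected linear running time.

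The step I expect to be the main obstacle is the linear-time implementation of step~(ii): handled naively, the $\Theta(n)$ ``next available labeled row'' queries cost $\Theta(n)$ each and yield only an $O(n^2)$ decoder, so the monotone-cursor amortization must be set up with care, and the corner cases of the decoding rules must be checked — notably the forced row $n-i$ that appears when $\sigma(1)\cdots\sigma(i-1)$ already fills the $(i-1)\times(i-1)$ upper-left subgrid, and the analogous configurations causing an abort in Cases~4 and~5. The other ingredients — correctness via Theorem~\ref{thm:main}, the asymptotics $p_n\to1$, and the $O(n)$ uniform sampling of a word of $\cM$ — are routine.
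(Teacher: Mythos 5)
Your proposal is correct and follows essentially the same route as the paper: rejection sampling by drawing a uniform marked word from $\cM_n$, running the decoding algorithm, and rejecting on failure, with the acceptance probability tending to $1$ because the non-coding words (partially decoded permutations) are asymptotically negligible compared to $\Sq_n$. You simply flesh out details the paper leaves implicit, namely the explicit $O(n)$ sampling of a word of $\cM_n$ from the product form of $M(t;x,y)$ and the monotone-cursor implementation making each decoding pass linear.
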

\begin{proof} To generate a permutation of size $n$,
one generates triples $(u, v,\ell)$ from $\cM_n$ and applies the
decoding algorithm until a square permutation of size $n$ is
obtained. Each generation and decoding step takes linear time in
$n$. The probability of rejection goes to zero as $n$ goes to infinity
since it corresponds to partially decoded permutations that are
asymptotically negligible with respect to the total number of square
permutations.
\end{proof}

\section{A decoding algorithm for fully indecomposable square permutations}\label{sec:NonCodingIndec}
By using the previous encoding for square permutations we can
specialize the bijection of Theorem~\ref{thm:main} to the class of
indecomposable square permutations, co-indecomposable square
permutations or fully indecomposable square permutations (\emph{i.e.}
square permutations that are both indecomposable and
co-indecomposable). The idea is to adapt the previous decoding
algorithm in order to remove triples that do not correspond to objects
of these class: each resulting algorithm is similar to the decoding
algorithm for square permutations, with slightly different stopping
conditions. We present here the case of fully indecomposable square
permutations (see Figure~\ref{Fig:RecPartInCoIn} for an example):
\begin{itemize}
  \item in Case 2 above  one should stop as soon as
  $\sigma_1\ldots\sigma_{i-1}$ is contained in the grid
  $(i-1)\times(i-1)$ attached to the lower-left corner, regardless of
  the other conditions;
 
\item in Cases 3 and 4 above, one should stop as soon as
  $\sigma_1\ldots\sigma_{i-1}$ is contained in the grid
  $(i-1)\times(i-1)$ attached to the upper-left corner, regardless of
  the other conditions.
\end{itemize}
\begin{figure}
   \begin{center}
     \includegraphics[scale=.7]{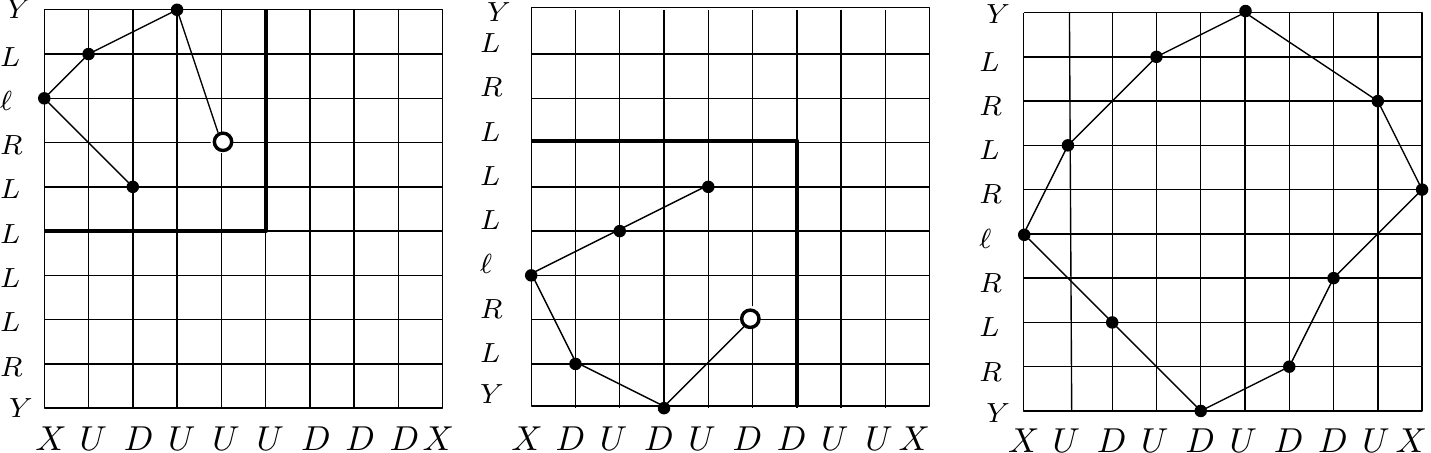}
   \end{center}
   \vspace{-1em}
  \caption{Three examples of triples $(u,v,\ell)$. Only the  third one  gives a square permutation that is indecomposable and co-indecomposable.}
  \label{Fig:RecPartInCoIn}
  \vspace{-1em}
\end{figure} 
Let $\mathcal{F}$ be the class of fully indecomposable square permutations, then from this decoding algorithm we obtain the following bijection:
  \begin{align*}
    \cM
    &\equiv 
    \mathcal{F}
    \;
    +\;\cCsw\cdot\cW\cdot\{(X,Y)\}\;
    +\;\cCnw\cdot\cW\cdot\{(X,Y)\}
  \end{align*}
  and consequently the following equation for its refined generating function:
  \begin{align*}
    M(t;x,y)
    =
    {F}(t;x,y)
    \;&
    +\;\tCsw{}(t;x,y)\cdot W(t;x,y)\cdot txy\;\\
    &+\;\Cnw{}(t;x,y)\cdot W(t;x,y)\cdot txy
  \end{align*}
  where $\tCsw(t;x,y)$ and $\Cnw(t;x,y)$ can obtained by Formula~\eqref{for:Tswxy}--\eqref{for:Tnwxy} via Equation~\eqref{eq:indec}. In particular for $x=y=1$ we see that the (known) can be written in a similar form as $Sq(t)$ and $Cp(t)$:
    \[
    F(t;1,1)=\frac{t^2}{1-4t}\left(1+\frac{2t}{1-4t}\right)-\frac{t^2}{(1-4t)^{3/2}}\\
    \]

\section{The encoding for convex permutominoes and a decoding algorithm}\label{sec:NonCodingPermutominoes}
\begin{figure}
   \begin{center}
     \includegraphics[scale=.7]{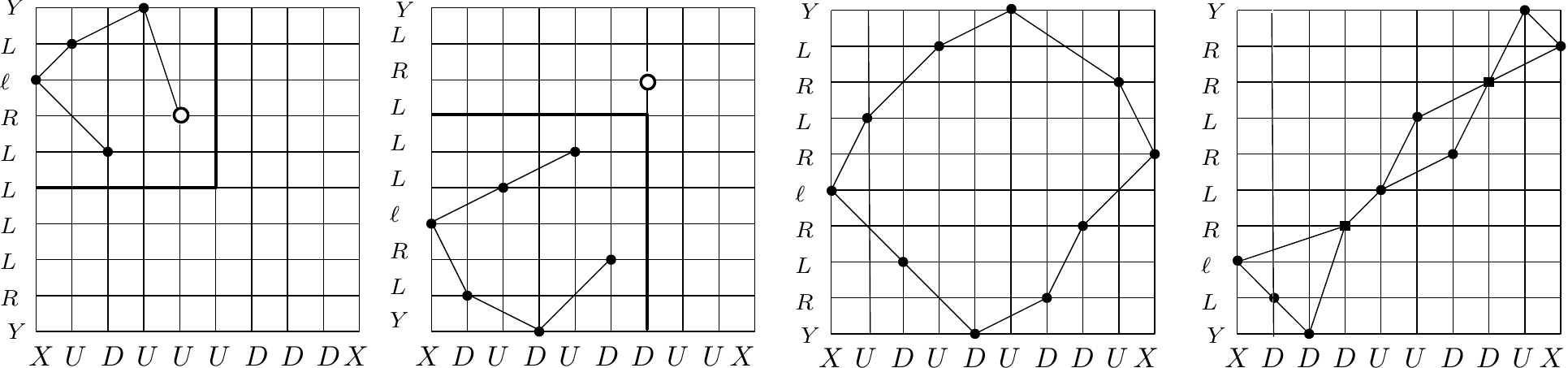}
   \end{center}
   \vspace{-1em}
  \caption{Four examples of triples $(u,v,\ell)$. The third one and the fourth one belong to the class of  colored co-indecomposable square permutations.}
  \label{Fig:RecPart2}
  \vspace{-1em}
\end{figure}

The decoding algorithm for convex permutominoes, viewed as colored
co-indecomposable square permutations, is similar to the decoding
algorithm for square permutations, with slightly different stopping conditions (see Figure~\ref{Fig:RecPart2} for an example):
\begin{itemize}
\item in Cases 3 and 4 above, one should stop as soon as
  $\sigma_1\ldots\sigma_{i-1}$ is contained in the grid
  $(i-1)\times(i-1)$ attached to the upper-left corner, regardless of
  the other conditions, to eliminate co-decomposable permutations;
\item in Case 5 above, one should declare the insertion invalid only if
  $j\neq i$, to allow for colored fixed points.
\end{itemize}

As a result, we obtain the bijection
\begin{align*}
  \cM \;&\equiv\;\cCp
  +\cDCnw{}^+\cdot W\cdot\{(X,Y)\}  +\cDCsw\cdot\{(D,L)\}\cdot W\cdot\{(X,Y)\}
\end{align*}
where $\cDCsw$ denotes the subclass of colored permutations that can
be obtained from colored square permutations ending with a maximal
value by removing this maximal value, and $\cDCnw{}^+$ denotes the
subclass of co-indecomposable
colored permutations that can be obtained from  colored co-indecomposable 
square permutations ending with a minimal value by removing this last value
and standardizing the permutation.

In order to deal with $\cDCnw{}^+$ and $\cDCsw$   we use again that the
symmetric permutation classes $\cDCse{}^+$ and $\cDCne$ can be obtained as
adapted special cases of the above construction of $\cCp$:
\begin{itemize}
\item $\cDCse{}^+$, co-indecomposable colored square permutations starting with the maximal value ($\sigma_1=n$ for a permutation of size $n$):
  \begin{align*}
    \{(w,n)\in\cM\} &\equiv\;
    (\bullet\cDCse{}^+) 
    +(\bullet\cQse{}^+)\cdot W\cdot\{(X,Y)\} +\bullet\cdot
    \{(D,L),(D,R)\}\cdot W\cdot\{(X,Y)\}
  \end{align*}
where $\cQse{}^+$ denotes the class of permutations obtained from co-indecomposable permutations of $\cCp$ that start with the maximal value and end with the minimal value by removing these extremal values.

\item $\cDCne$,  colored square permutations starting with the minimal value ($\sigma\in\cSq$ s.t. $\sigma_1=1$), are obtained when the decoding is applied to
  pairs $(w,1)$:
\begin{align*}
  \{(w,1)\in\cM\}&\equiv\;(\bullet\cDCne)+(\bullet(1+\cQne))\cdot\{(D,L)\}\cdot W\cdot\{(X,Y)\}
\end{align*}
where $\cQne$ denotes the class of permutations that are obtained from
colored permutations of $\cCp$ that start with the minimal value and
end with the maximal value by removing these extremal values.
\end{itemize}

Finally the classes $\cQse$ and $\cQne$ are again simple variants of the
class of permutations avoiding the pattern 123, that for which various
algebraic decompositions or bijective constructions are available.
\paragraph{Generating functions}

Taking into account the $U$, $X$, $L$ and $Y$s, the above bijection translate into generating function relations. For convex permutominoes:
 \begin{align*}
    M(t;x,y)
    =
    \Cp(t;x,y)
    \;&
    +\;\tDCsw(t;x,y)\cdot ty\cdot W(t;x,y)\cdot txy\;\\
    &+\;\DCnw{}^+(t;x,y)\cdot W(t;x,y)\cdot txy.
  \end{align*}
For the set $\DCse$, and  $\cDCne{}^+$,
the bijection yields
\begin{align*}
  txy\cdot W(t;x,y)\cdot txy&=\DCne_1(t;x,y)+\Qne(t;x,y)\cdot ty\cdot W(t;x,y)\cdot txy\\
    txy\cdot W(t;x,y)\cdot txy
    &=\DCse_1{}^+(t;x,y)+txy\cdot t(1+y)\cdot W(t;x,y)\cdot txy
        +\Qse{}^+(t;x,y)\cdot W(t;x,y)\cdot txy
\end{align*}

The rest of the computation is analogous to the case of square
permutations using symmetries, and leads again to a parametrization in
terms of the same refined Narayana series $N(t;x,y)$ and $N(t;xy,1)$.

\acknowledgements
\label{sec:ack}
We warmly thank the referees for their constructive suggestions as
well as Gilles Schaeffer for his help during the preparation of this
paper.

\label{sec:biblio}

\end{document}